\newcommand{\inv}{^{-1}}
\newcommand{\N}{\mathbb N}
\newcommand{\R}{\mathbb R}
\newcommand{\bS}{\mathbb S}
\newcommand{\cone}{\operatorname{cone}}
\newcommand{\diam}{\operatorname{diam}}
\newcommand{\card}{\operatorname{card}}
\newcommand{\abs}[1]{\lvert#1\rvert}
\newcommand{\norm}[1]{\lVert#1\rVert}
\newtheorem{theorem}{Theorem}[section]
\newtheorem*{theorem*}{Theorem}{\bf}{\it}
\newtheorem{proposition}[theorem]{Proposition}
\newtheorem*{proposition*}{Proposition}{\bf}{\it}
\newtheorem{lemma}[theorem]{Lemma}
\newtheorem*{lemma*}{Lemma}{\bf}{\it}
\theoremstyle{definition}
\newtheorem{definition}[theorem]{Definition}
\newtheorem*{definition*}{Definition}
\theoremstyle{remark}
\newtheorem{remark}[theorem]{Remark}
\numberwithin{equation}{section}
\newtheorem{example}[theorem]{Example}
\numberwithin{equation}{section}
\begin{document}

\title[Proper branched coverings]{On proper branched coverings and a question of Vuorinen}

\author{Aapo Kauranen}
\address{ 
  Department de Matem\`atiques, Universitat Aut\`onoma de Barcelona, 08193, 
  Bellaterra (Barcelona), Spain}
\email{aapo.p.kauranen@gmail.com}
\thanks{A.K.
  acknowledges financial support from the Spanish Ministry of Economy and 
  Competitiveness, through the ``Marí\'{\i}a de Maeztu'' Programme for Units of 
  Excellence in R\&D (MDM-2014-0445) and MTM-2016-77635-P (MICINN, Spain).}

\author{Rami Luisto}
\address{Department of Mathematics and Statistics, P.O. Box 35, FI-40014 University of Jyv\"askyl\"a, Finland}
\email{rami.luisto@gmail.com}
\thanks{R.L.\
  was partially supported by the Academy of Finland
  (grant 288501 `\emph{Geometry of subRiemannian groups}')
  and by the European Research Council
  (ERC Starting Grant 713998 GeoMeG `\emph{Geometry of Metric Groups}').
}

\author{Ville Tengvall}
\address{Department of Mathematics and Statistics, P.O. Box 68 (Pietari Kalmin katu 5), FI-00014 University of Helsinki, Finland}
\email{ville.tengvall@helsinki.fi}
\thanks{The research of V.T.\ was supported by the Academy of Finland, project number 308759.}

\subjclass[2010]{57M12, 30C65, 57M30}
\keywords{Quasiregular mappings, discrete and open mappings,
  branched coverings, proper maps, local injectivity,
  branch set, fundamental group, torsion}
\date{\today}

\begin{abstract}
  We study global injectivity of proper branched coverings defined on the Euclidean $n$-ball in the case when
  the branch set is compact. In particular we show that such mappings are homeomorphisms when $n=3$ or
  when the branch set is empty. This proves the corresponding cases of a question of Vuorinen
  from \cite{Vuorinen2}.
\end{abstract}

\maketitle

\section{Introduction}
\label{sec:Intro}

For a continuous, open and discrete mapping
$f \colon \Omega \to \Omega'$ between Euclidean domains
we define its \emph{branch set}, denoted $B_f$, to be the set of points where $f$ is not locally injective.
The structure of this set is connected to the topology and geometry of the mapping itself,
but in general the structure of the branch set is not well understood.
Even for the important special class of continuous, open and discrete maps called quasiregular mappings
many properties of the branch set remain largely unknown,
but the topic garners great interest. In Heinonen's
ICM address, \cite[Section 3]{HeinonenICM}, he asked the following: 
\begin{quote}
  Can we describe the geometry and the topology of the allowable branch sets of quasiregular
  mappings between metric $n$-manifolds?
\end{quote}

In this paper we focus on a particular aspect of this general question of Heinonen known as \emph{Vuorinen's question} concerning the compactness of the branch set of \emph{proper} continuous, open and discrete mappings.
The question is as follows:
\begin{quote}
  Suppose that $f \colon B^n \to f(B^n) \subset \R^n$, $n \geq 3$, is a proper continuous,
  open and discrete mapping with a compact branch set $B_f$. Is $f$ then a homeomorphism?
\end{quote}
In this paper we will refer to this question simply as \emph{the Vuorinen question}.
The question first appeared in the work of Vuorinen \cite[Remarks 3.7]{Vuorinen2} on the boundary behavior
of quasiregular mappings. Later it was also stated in the well-known monograph \cite[p.193, (4)]{Vuorinen}
and the query \cite{VuorinenQueries} of the same author. It was further promoted by Srebro in a survey collection
\cite[p. 108]{QuasiconformalSpaceMappings}, and also given in the slightly stronger setting of quasiregular mappings
in the collection \cite[p. 503, 7.66]{BarthBrannanHayman} of research problems in complex analysis. Our first main result gives a positive answer to the question in dimension three.
\begin{theorem}\label{thm:VuorinenDim3}
  Let $f \colon B^3 \to f(B^3) \subset \R^3$ be a proper continuous, open and discrete map. If $B_f$ is compact,
  then $f$ is a homeomorphism.
\end{theorem}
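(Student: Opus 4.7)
The plan is to show that $f$ has topological degree $N = 1$. Since a proper continuous, open, discrete map $f\colon B^3 \to Y := f(B^3)$ has a well-defined degree $N \in \mathbb{N}$ (the common cardinality of $f^{-1}(y)$ for $y \in Y \setminus f(B_f)$), knowing $N = 1$ makes $f$ a continuous open bijection, hence a homeomorphism.

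Using compactness of $B_f$, fix $r \in (0,1)$ with $B_f \subset B^3(0,r)$, set $K := f(\overline{B^3(0,r)}) \subset Y$ (compact), and choose $r' \in (r,1)$ with $f^{-1}(K) \subset B^3(0,r')$, which is possible by properness of $f$. The annular shell $A' := B^3 \setminus \overline{B^3(0,r')}$ is then simply connected, disjoint from $f^{-1}(K)$, and mapped by $f$ into the open set $V := Y \setminus K$. Let $V_0$ be the connected component of $V$ containing the connected set $f(A')$; by properness $f(A')$ accumulates on $\partial Y$, so $V_0$ is not relatively compact in $Y$. Let $W_0$ be the connected component of $f^{-1}(V_0)$ containing $A'$; the restriction $f|_{W_0}\colon W_0 \to V_0$ is a proper local homeomorphism, hence a covering map.

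I would then argue that $W_0$ is the only component of $f^{-1}(V_0)$: any other component $W_1$ is disjoint from $A'$ and hence contained in the compact set $\overline{B^3(0,r')} \subset B^3$, so $f(W_1)$ is relatively compact in $Y$. But the restriction $f|_{W_1}\colon W_1 \to V_0$ must be a surjective covering (as $V_0$ is connected), while $V_0$ is not relatively compact in $Y$ --- contradiction. Therefore every fibre $f^{-1}(y)$ with $y \in V_0$ lies in $W_0$, so $f|_{W_0}\colon W_0 \to V_0$ is an $N$-sheeted covering.

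The crucial remaining step would be to establish that $W_0$ is simply connected: granted this, $W_0$ would be the universal cover of $V_0$, so $\pi_1(V_0)$ is finite of order $N$. Combined with the classical three-dimensional fact that the fundamental group of any open subset of $\R^3$ is torsion-free (a consequence of the sphere theorem and the structure theory of orientable 3-manifolds), this forces $\pi_1(V_0)$ to be trivial, whence $N = 1$. The main obstacle is to prove $\pi_1(W_0) = 0$: although $W_0 \subset B^3$ is the complement of a compact set in a simply connected 3-manifold, generic such complements may have nontrivial $\pi_1$ (knot complements, say), so one must exploit the particular structure of $W_0$ inherited from $f$ --- namely that $B^3 \setminus W_0$ equals $f^{-1}(K)$ together with preimages of relatively compact components of $Y \setminus K$ --- together with three-dimensional topological tools such as Dehn's lemma to rule out nontrivial loops.
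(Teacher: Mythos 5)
Your reduction to degree one is sound, and your construction of the shell $A'$, the end component $V_0$, the uniqueness of the component $W_0$, and the $N$-sheeted covering $f|_{W_0} \colon W_0 \to V_0$ all parallel the paper's Lemma \ref{lemma:VuorinenDim3Lemma} and the opening of the proof of Proposition \ref{prop:NoTorsionImpliesVuorinen} (where $E_R = B \setminus \overline{f^{-1}(f(B_R))}$ plays the role of your $W_0$). But the step you yourself flag --- proving $\pi_1(W_0) = 0$ --- is a genuine gap, not a loose end. Nothing in the setup forces it: $W_0$ is the complement in $B^3$ of a compact set of the form $f^{-1}(K) \cup f^{-1}(\text{bounded components of } Y \setminus K)$, and complements of compacta in $B^3$ can have infinite, even non-finitely generated fundamental group; there is no visible mechanism (Dehn's lemma included) by which the branched-covering structure forbids, say, a knotted preimage compactum. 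Without $\pi_1(W_0)=0$, the $N$-sheeted covering only gives that $f_*\pi_1(W_0)$ has index $N$ in $\pi_1(V_0)$, which is perfectly compatible with $\pi_1(V_0)$ being infinite and torsion-free, so the torsion-freeness of $\pi_1$ of open subsets of $\R^3$ cannot be brought to bear and the argument yields nothing.

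The paper's proof shows that you never need the cover to be universal; it extracts a torsion element directly. Pick $x_0$ in the shell component and, using that this component is connected and saturated (it equals $f^{-1}$ of its image, Lemma \ref{lemma:VuorinenDim3Lemma}), a path $\alpha$ joining $x_0$ to a second point $x_1$ of the same fiber. The loop $\beta = f \circ \alpha$ is nontrivial in $\pi_1(Y \setminus K)$ when $N \geq 2$, since a null-homotopy would lift through the covering to contract $\alpha$ rel endpoints, impossible as $x_0 \neq x_1$. On the other hand, concatenating successive lifts of $\beta$ closes up after some $m \leq N$ steps into a loop $\gamma$ with $f \circ \gamma = \beta^m$; since $\gamma$ lies in the round spherical shell $B^3 \setminus \overline{B}_R$, which is simply connected because $n \geq 3$, the loop $\gamma$ contracts inside a slightly larger saturated shell on which $f$ is still a covering, and pushing the contraction down gives $[\beta]^m = 0$ in $\pi_1(Y \setminus K)$. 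Thus $[\beta]$ is a nontrivial torsion element there, contradicting Papakyriakopoulos's theorem (Proposition \ref{prop:3dimNoTorsion}) --- the very torsion-freeness fact you invoke, but now applied to kill a torsion element rather than to trivialize a finite group. In short, your strategy is repaired by replacing ``show $W_0$ is the universal cover'' with this lift-concatenation argument: the simple connectivity that is actually used is that of the ambient shell $B^3 \setminus \overline{B}_R$, which comes for free, not that of $W_0$, which is unavailable.
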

The crucial idea of the proof is to investigate the existence of the torsion elements of the
fundamental group of the target of the underlying map. If such elements do not exists then the
mapping is a homeomorphism; for the precise statement see Proposition \ref{prop:NoTorsionImpliesVuorinen}.
We furthermore show that the claim is true in all dimensions
when the branch set is empty.
\begin{theorem}\label{thm:VuorinenNoBranch}
  Let $f \colon B^n \to f(B^n) \subset \R^n$ be a proper continuous, open and discrete map with $n \ge 2$. If $B_f=\emptyset$,
  then $f$ is a homeomorphism.
\end{theorem}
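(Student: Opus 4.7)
The plan is to promote the proper local homeomorphism $f$ to a finite covering of its image and then to rule out nontrivial deck transformations using the rigidity of $\R^n$ under free actions by finite groups. Since $B_f=\emptyset$, the map $f$ is a local homeomorphism, so its image $U:=f(B^n)$ is open in $\R^n$ (directly, or by invariance of domain) and is connected, locally path-connected and semi-locally simply connected; hence standard covering space theory applies to $U$.

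First I would show that $f\colon B^n\to U$ is a finite-sheeted covering. For $y\in U$ the fibre $f\inv(y)$ is compact by properness and discrete since $f$ is a local homeomorphism, hence finite, say $\{x_1,\dots,x_k\}$. Choosing pairwise disjoint open neighbourhoods $U_i\ni x_i$ on which $f$ restricts to a homeomorphism, and using properness to prevent points of nearby fibres from escaping $\bigcup_i U_i$ (otherwise one extracts a convergent subsequence whose limit lies in $f\inv(y)\setminus\bigcup_i U_i$, a contradiction), one obtains by shrinking a connected neighbourhood $V$ of $y$ whose preimage is a disjoint union of $k$ sheets mapped homeomorphically onto $V$. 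This gives $f$ the structure of a covering of some finite degree $k\ge 1$. Since $B^n$ is simply connected, $f$ is then the universal covering of $U$ and the deck transformation group $G\cong\pi_1(U)$ has order $k$; in particular $G$ is a finite group acting freely by homeomorphisms on $B^n$, which is homeomorphic to $\R^n$.

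The concluding step is to show that $G$ is trivial. If not, $G$ contains a subgroup of some prime order $p$ acting freely on $\R^n$. This is ruled out by a classical rigidity result: a nontrivial finite group has infinite cohomological dimension, whereas an aspherical quotient of a contractible $n$-manifold would have cohomological dimension at most $n$ (equivalently, Smith theory forces every $\Z/p$-action on a $\Z/p$-acyclic space to have non-empty fixed point set, contradicting freeness). Hence $k=1$ and $f$ is a homeomorphism. I expect the main obstacle to be the clean invocation of this last topological input; the preparatory upgrade of ``proper local homeomorphism'' to ``finite covering'' is routine bookkeeping, but the rigidity of $\R^n$ under free finite group actions has to be imported as a (well-known but non-elementary) fact from algebraic topology.
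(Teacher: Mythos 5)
Your proposal is correct and follows essentially the same route as the paper: upgrade the proper branchless map to a finite-sheeted covering, observe that $B^n$ is then the universal cover so $\pi_1(f(B^n))$ is finite, and kill it with the classical fact that no nontrivial finite group acts freely on a contractible finite-dimensional space. The only cosmetic difference is in packaging that last input---you cite Smith theory (equivalently, infinite cohomological dimension of finite groups), while the paper puts a CW structure on the image via Whitney decomposition and invokes the torsion-freeness of $\pi_1$ of a finite-dimensional $K(G,1)$ (Hatcher, Proposition 2.45), whose proof rests on the same cohomological fact.
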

These results should be contrasted with Zorich's global homeomorphism theorem, see e.g.\ \cite{Zorich} or
\cite[Corollary III.3.8]{Rickman-book}, which states that if $n\geq 3$, then
an entire quasiregular mapping $\R^n \to \R^n$ with an empty branch set is always a quasiconformal mapping, i.e.\ a globally
injective quasiregular mapping. In addition, the origin of Vuorinen's question is in the study of induced boundary mappings
of closed quasiregular mappings,  see \cite[section~5]{VuorinenThesis} and especially \cite[Theorem 5.3]{VuorinenThesis}. In
this context our results can be used to study whether it is possible to produce extra branching to a quasiregular mapping by
changing the mapping only locally. This would be especially desirable when we are constructing new quasiregularly elliptic
manifolds as connected sums of other quasiregularly elliptic manifolds. However, at least in three dimensions
Theorem~\ref{thm:VuorinenDim3} prohibits the non-global topological modifications of these mappings by providing a positive
answer to the following question from \cite[Open problem 9.18, p. 125]{Vuorinen}:
\begin{quote}
  Let $f \colon \Omega \to \R^n$ be a branched covering with $\Omega \subset \R^n$ a domain. Suppose
  $x_0 \in \Omega$ and $r \in (0, d(x_0,\partial \Omega))$. If $B(x_0,r)$ is a normal domain of
  $f$ and $B_f \cap \partial B(x_0,r) = \emptyset$, is $f|_{B(x_0,r)}$ then necessarily injective?
\end{quote}

In our proof of Theorem~\ref{thm:VuorinenDim3}, a crucial step is to show that
in all dimensions the claim follows whenever the target has torsion-free fundamental group at infinity,
see Definition \ref{def:TorsionFreeAtInfinity} and Proposition \ref{prop:NoTorsionImpliesVuorinen}.
The Vuorinen question in three dimensions then follows by noting that any domain in $\R^3$ has
a torsion-free fundamental group. 
In contrast, in higher dimensions the fundamental group of a Euclidean domain can have torsion elements,
this is exemplified e.g. with a tubular neighborhood of a projective plane embedded in $\R^4$,
and so our proof does not generalize to all dimensions. Another Euclidean domain with
torsion non-free fundamental group is studied in Example \ref{ex:Nemesis}.
Thus the Vuorinen question is still open in dimensions four and above in the general case.
For the case when branch set is empty, Theorem~\ref{thm:VuorinenNoBranch},
the proof also relies in the study of the existence of torsion
elements in the fundamental group of the target and we rely on the theory of $K(G,1)$-spaces; see Section \ref{sec:BranchlessVuorinen}.

We note that the proof of Theorem \ref{thm:VuorinenDim3} actually gives
rise to the following result.
\begin{proposition}
  Let $f \colon M \to f(M) \subset \R^3$ be a proper branched covering where
  $M$ is an open $3$-manifold simply connected at infinity. Suppose
  $B_f$ is compact. Then $f$ is a homeomorphism.
\end{proposition}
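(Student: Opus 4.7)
The plan is to apply Proposition~\ref{prop:NoTorsionImpliesVuorinen} directly, exploiting the fact that the proof of Theorem~\ref{thm:VuorinenDim3} factors through that proposition and uses the specific choice of domain $B^3$ only to verify the simple connectivity at infinity of the source and the torsion-free fundamental group at infinity of the target. Both of these conditions survive under the weaker hypothesis that $M$ is an open $3$-manifold simply connected at infinity.

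First I would check the hypothesis concerning the target. Since $f$ is a branched covering, the image $f(M)\subset\R^3$ is open, and it is connected as the continuous image of the connected set $M$. Thus $f(M)$ is a domain in $\R^3$, hence an open $3$-manifold. As noted in the introduction, every domain in $\R^3$ has torsion-free fundamental group (a classical consequence of the Sphere Theorem of Papakyriakopoulos). Applying this fact to every complement $f(M)\setminus K$ with $K\subset f(M)$ compact---which is again open in $\R^3$, and hence an open $3$-manifold---shows that $f(M)$ has torsion-free fundamental group at infinity in the sense of Definition~\ref{def:TorsionFreeAtInfinity}.

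Next, the hypothesis on the domain required by Proposition~\ref{prop:NoTorsionImpliesVuorinen}---namely, simple connectivity at infinity---is precisely part of the assumption on $M$ in the statement. With both hypotheses in place, Proposition~\ref{prop:NoTorsionImpliesVuorinen} yields that $f$ is a homeomorphism.

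The only point to verify carefully is that Proposition~\ref{prop:NoTorsionImpliesVuorinen} is genuinely formulated and proved for proper branched coverings from any simply-connected-at-infinity $n$-manifold, rather than being tailored to $B^n$. Since that proposition is invoked in the introduction as an abstract step in the proof of Theorem~\ref{thm:VuorinenDim3}, and since the role of the source in that step is entirely encoded in the behavior of $\pi_1$ at infinity, this should pose no difficulty: once the proposition is established in this generality, the present statement follows immediately, with no additional argument required.
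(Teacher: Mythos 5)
Your proposal is correct and takes essentially the same route as the paper, which offers no separate proof of this proposition beyond the remark that the proof of Theorem~\ref{thm:VuorinenDim3} goes through: the ball $B^3$ enters the proof of Proposition~\ref{prop:NoTorsionImpliesVuorinen} only via the simple connectivity at infinity of the source (used to contract the loop $\gamma$ inside the annulus $B^3 \setminus \overline{B}_R$) and via Papakyriakopoulos' theorem giving the target torsion-free fundamental group at infinity --- exactly the two points you isolate. The one caveat is that Proposition~\ref{prop:NoTorsionImpliesVuorinen} is literally stated for $B^n$, so strictly one reruns its proof with an exhaustion of $M$ by compact sets, the complements supplied by simple connectivity at infinity playing the role of the simply connected round annuli; this is the same step the paper leaves implicit.
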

Likewise we note that our second main result, Theorem \ref{thm:VuorinenNoBranch},
is a corollary of the following proposition, which can be proved identically.
\begin{proposition}
  Let $f \colon M \to f(M) \subset \R^n$ be a proper branched covering where
  $M$ is an open $n$-manifold. If $M \setminus f \inv (f (B_f))$ has a contractible
  universal cover, then $f$ is a homeomorphism.
\end{proposition}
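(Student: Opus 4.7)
The plan is to run the strategy underlying Theorem~\ref{thm:VuorinenNoBranch}, but applied to the restriction of $f$ to the complement of the preimage of $f(B_f)$, and to conclude via the theory of $K(G,1)$-spaces that the covering obtained is trivial.

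First I would set $M^* \colonequals M \setminus f\inv(f(B_f))$ and $N^* \colonequals f(M) \setminus f(B_f)$. Because $f$ is proper and $B_f$ is closed in $M$, the set $f(B_f)$ is closed in $f(M)$, and both $M^*$ and $N^*$ are open $n$-manifolds. Moreover, $f|_{M^*} \colon M^* \to N^*$ is a proper local homeomorphism between Hausdorff spaces, hence a finite-sheeted covering map.

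Next, by hypothesis the universal cover $\widetilde{M^*}$ of $M^*$ is contractible. The composition $\widetilde{M^*} \to M^* \to N^*$ is a covering map with simply connected total space, so it is also the universal cover of $N^*$. Consequently $N^*$ is a $K(\pi_1(N^*),1)$-space. Since $N^*$ is an open subset of $\R^n$, it has the homotopy type of an at most $n$-dimensional CW complex, so $\pi_1(N^*)$ has finite cohomological dimension. As any group containing torsion has infinite cohomological dimension, $\pi_1(N^*)$ must be torsion-free.

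The final step mirrors the proof of Theorem~\ref{thm:VuorinenNoBranch}: one combines torsion-freeness of $\pi_1(N^*)$ with the finiteness of the degree of $f|_{M^*}$ to conclude the degree equals one, whence $f|_{M^*}$ is a homeomorphism and, by openness and discreteness of $f$ together with the density of $M^*$ in $M$, $f$ itself is a homeomorphism. The main obstacle is this last step: in Theorem~\ref{thm:VuorinenNoBranch} the simple-connectedness of $B^n$ identifies the covering degree directly with $|\pi_1(f(B^n))|$ and a finite torsion-free group is trivial, whereas here one has only the index $[\pi_1(N^*) : (f|_{M^*})_*\pi_1(M^*)]$ to work with, so one must exploit the branched-cover structure (and the ambient $n$-manifold topology of $M$) to force this index to equal one.
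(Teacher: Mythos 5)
Your first three steps coincide with what the paper intends: the authors say this proposition ``can be proved identically'' to Theorem~\ref{thm:VuorinenNoBranch}, and your passage to $M^* = M \setminus f\inv(f(B_f))$ and $N^* = f(M)\setminus f(B_f)$, the identification of $f|_{M^*}$ as a finite-sheeted covering map (this is Lemma~\ref{lemma:NoBranchProperIsCover} together with Lemma~\ref{lemma:Equidistribution}), the recognition of $N^*$ as a $K(G,1)$ via the composed covering $\widetilde{M^*}\to M^*\to N^*$, and the torsion-freeness of $\pi_1(N^*)$ all match the template; your cohomological-dimension argument for torsion-freeness is exactly the content of Proposition~\ref{prop:CW-noTorsion} unpacked, so that variation is cosmetic. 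But the proof is not complete, and the step you flag as ``the main obstacle'' is a genuine gap rather than a technicality. What you dropped from the template is the finiteness of $\pi_1(N^*)$, i.e.\ the analogue of Lemma~\ref{lemma:YhasFiniteFG}: in the paper's proof the chain is finite $+$ torsion-free $\Rightarrow$ $\pi_1$ trivial $\Rightarrow$ the covering is a homeomorphism, and the index $[\pi_1(N^*) : (f|_{M^*})_*\pi_1(M^*)]$ never needs to be analyzed at all. You omitted that lemma because its proof lifts the map to the universal cover of the target, which requires the source to be simply connected --- and that is precisely the point where the generalization lives or dies.

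Moreover, the obstacle cannot be overcome from the facts you have established (asphericity of $N^*$, torsion-freeness of $\pi_1(N^*)$, finiteness of the degree). Consider $f \colon \bS^1 \times B^2 \to \bS^1 \times B^2 \subset \R^3$, $f(z,w) = (z^2, w)$, with the target an open solid torus in $\R^3$. This is a two-sheeted covering map, hence a proper branched covering with $B_f = \emptyset$; the source is an open $3$-manifold whose universal cover $\R \times B^2$ is contractible; $N^*$ is aspherical with $\pi_1(N^*) \cong \Z$ torsion-free; yet the index equals two and $f$ is not injective. So no argument along the lines of your final step can force the index to be one, and indeed the hypothesis has to be read so that $M \setminus f\inv(f(B_f))$ is simply connected (hence, with your hypothesis, contractible), playing the role that $B^n$ plays in Theorem~\ref{thm:VuorinenNoBranch}. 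Under that reading the argument closes exactly as in the paper: Lemma~\ref{lemma:YhasFiniteFG} applies verbatim to the proper map $f|_{M^*}$, giving $\pi_1(N^*)$ finite, hence trivial by torsion-freeness, so $f|_{M^*}$ is a degree-one covering and therefore a homeomorphism; and then $B_f = \emptyset$, since a branch point would give points of $N^*$ arbitrarily close to $f(B_f)$ at least two preimages, contradicting Lemma~\ref{lemma:Equidistribution}.
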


Note that the question of Vuorinen is not true in dimension two unless the branch set is assumed
to be empty as is demonstrated by the planar winding map. However, in higher dimensions the winding
map does not serve as a counterexample as the branch set of this map is an $(n-2)$-dimensional
plane and thus not compact. Nonempty compact branch sets are also possible to construct for continuous, open and
discrete mappings $B^n \to \R^n$, see e.g. \cite{KauranenLuistoTengvall}, but the known examples are no
longer proper maps. In addition, noninjective local homeomorphisms $B^n \to \R^n$ can be exemplified with the mapping
\begin{align*}
  \psi \colon (0,4\pi)^2 \times (0,1) \to \R^3,
  \quad
  \psi(z,t)\mapsto (\exp(z),t)
\end{align*} 
and its higher-dimensional analogues, but this mapping fails to be proper as well.
Note that all the above mentioned mappings are quasiregular mappings as well.

The examples in the preceding paragraph seem to hint that the challenge in the solution,
or a possible counterexample, to the Vuorinen question might lie in trying to
balance the  properness of the mapping with the compactness of the branch set.
Furthermore we note that, as Proposition \ref{prop:NoTorsionImpliesVuorinen} demonstrates,
a possible counterexample to the Vuorinen question must display some nontrivial structure
as the target of the map must have a complicated boundary, in some sense.
We also remark that the branch set itself can also exhibit very complicated structure; indeed,
on the one hand, for a continuous, open and discrete mappings whose branch set image is topologically
piecewise linear, the mapping is itself locally equivalent to a combination of
winding maps; see e.g.\ \cite{LuistoPrywes}.
On the other hand, there are mappings which do not
exhibit such simple behavior, notably the Heinonen-Rickman
map whose branch set contains a wild Cantor set (\cite{HeinonenRickman2}) and the
classical example of Church and Timourian
from \cite{ChurchTimourian} which is based on deep work of Cannon and Edwards, see
e.g.\ \cite{Cannon} and the references within.
We discuss more about the example of Church and Timourian in Example \ref{ex:Nemesis}.

\section*{\textbf{Acknowledgments}}

All of the authors would like to express their gratitude to Pekka Pankka for his valuable insights, comments and
encouragement throughout this project. Furthermore, communications with Matti Vuorinen,
during which he has shared his expertise concerning the problem, have been invaluable for our work.
We also extend our thanks to Martina Aaltonen, Jos\'e Andr\'es Rodriguez Migueles, Mike Miller and Kai Rajala for discussion
and advice related to the topic.

\section{Preliminaries}
\label{sec:Preli}

\subsection{Notation}

Throughout this paper we denote by $\Omega \subset \R^n$ a domain in $n$-dimensional Euclidean space $\R^n$ with $n \ge 2$. A point $x \in \R^n$ in coordinates is denoted as $x = (x_1, x_2, \ldots, x_n)$, and its Euclidean norm is denoted by $\abs{x} \colonequals  \sqrt{\sum_{i=1}^n x_i^2}$. An $n$-dimensional (open) ball in $\R^n$ of radius $r> 0$, centered at $a \in \R^n$ is denoted by
$$B^n(a,r) \colonequals  \{ a \in \R^n : \abs{x-y} < r  \},$$
and if the ball is centered at the origin we sometimes denote it by $B_r^n$ or by $B^n$ when $r=1$. If the dimension of the ball does not play a role we may exclude it from the notation. Moreover, if we want to emphasize that a ball $B(a,r)$ needs to be considered as a ball of some metric space $Y$ we may denote $B_Y(a,r)$. The topological interior of a set $A \subset \R^n$ will be denoted $A^\circ$ and the closure by $\overline{A}$. The topological boundary of a set $A$ is denoted by $\partial A$. The number of points in a set $A \subset \R^n$ is denoted by
$$\#(A) \colonequals  \card(A).$$

\subsection{Branched coverings and related mapping classes}
A mapping $f \colon \Omega \to \R^n$ is said to be
\begin{itemize}
\item[(i)] \emph{open} if it maps every open set in the domain to an open set in the target,
\item[(ii)] \emph{discrete} if the set of pre-images is a discrete set in the domain for every point in the target,
\item[(iii)] \emph{proper} if the pre-image of every compact set contained in $f(\Omega)$  is compact set in the domain, and
\item[(iv)] \emph{a branched covering map}, or more informally \emph{a branched covering}, if it is continuous, discrete, and open.
\end{itemize}
Note that by definition a branched covering $f \colon \Omega \to \R^n$ is locally injective outside its branch set
\begin{align*}
  B_f
  \colonequals \{ x \in \Omega \mid f \text{ is not a local homeomorphism at } x \}.
\end{align*}
The most elementary example of a noninjective proper branched cover is the \emph{winding mapping} $w_m \colon \R^n \to \R^n$ defined in cylindrical coordinates by the formula
\begin{align*}
  w_m(r, \theta, x_3, \ldots, x_n)
  = \bigl( r , m\theta, x_3, \ldots, x_n \bigr),
\end{align*}
with some given integer $m$ such that $\abs{m} \ge 2$. The study of continuous, open and discrete mappings has a solid history which can be studied more for instance from
\cite{ChurchHemmingsen1,BernsteinEdmonds78,Piergallini,AaltonenPankka} and the references therein. 

An important subclass of branched coverings is the class of quasiregular mappings.
A mapping $f \colon \Omega \to \R^n$ is called \emph{$K$-quasiregular} with $K \geq 1$ if 
\begin{itemize}
\item[(i)] it belongs to Sobolev space $W^{1,n}_{\text{loc}}(\R^n, \Omega)$, and
\item[(ii)] it satisfies the \emph{distortion inequality}
  $$\norm{Df(x)}^n \le K J_f(x)$$
  for almost every $x \in \Omega$.   
\end{itemize}
Above $\norm{Df(x)}$ refers to the operator norm of the weak differential $n \times n$ matrix $Df(x)$ at a point $x \in \Omega$, and $J_f(x) \colonequals \det Df(x)$ stands for the Jacobian determinant of $Df(x)$. 

For the basic knowledge on quasiregular mappings, we refer to \cite{Vuorinen,Rickman-book}. By the Reshetnyak theorem quasiregular mappings are branched coverings (\cite{Reshetnyak67} or \cite[Section IV.5, p.\ 145]{Rickman-book})
and so branched coverings can be seen as generalizations of quasiregular mappings, see
e.g.\ \cite{LuistoPankka-Stoilow}. For further discussion on quasiregular mappings and other related mapping classes we refer to
\cite{AstalaIwaniecMartinBook, HenclKoskela, IwaniecMartinBook, MRSYBook, VaisalaBook}.

The term \emph{branched cover(ing)} is widely used in the theory of quasiregular mappings to mean continuous, open and discrete
mappings; this terminology seems to originate from \cite{Rickman-book}.
However, the term is not standard even in closely related fields
and thus we will explore the nomenclature a bit. In particular, we wish to comment how a branched covering relates to \emph{covering maps}.

For \emph{proper} branched coverings the connection to covering maps is quite imminent. Indeed,
when a branched covering map $f \colon X \to Y$ is 
assumed to be proper then it is actually a covering map when restricted to the set
\begin{align*}
  X \setminus f^{-1}(f(B_f)).
\end{align*}
In particular, when $B_f = \emptyset$ a proper branched covering is always a covering map; for the
proof see Lemma \ref{lemma:NoBranchProperIsCover} below.
Note, however, that in general the restriction of a branched covering $f \colon X \to Y$
to the complement of $f \inv (f (B_f))$ does not yield a covering map; see e.g.\ \cite{Aaltonen} for some further discussion.

From this point of view we note that
in \cite{BonkMeyer} branched coverings are defined to be locally equivalent to winding maps,
in \cite{BernsteinEdmonds78} the mappings are studied only between closed manifolds which implies properness,
in \cite{Fox} branched covers are maps that are 'completions' of cover maps defined on an open dense subset of the domain,
and in \cite{Piergallini} branched covers are only studied in the PL-category where properness
also follows. This list should not be considered to be in any way exhaustive, but does demonstrate that
the term branched covering needs to be used carefully.
In our setting a branched covering needs not to be proper, but we do note that
every point in the domain always has a neighborhood basis of normal domains $U$ with the property
that the restriction of $f$ to $U$ is proper; see Section \ref{sec:NormalDomains} for details.
Thus, in a sense, a continuous, open and discrete mapping is locally similar to a covering map.

\subsection{Normal domains and path-lifting}
\label{sec:NormalDomains}

We follow the conventions of \cite{Rickman-book} and
say that a domain $U \subset X$ is a \emph{normal domain} for a branched covering $f \colon X \to Y$ if $U$ is compactly contained in $X$
and 
\begin{align*}
  \partial f (U) = f (\partial U).
\end{align*}
A normal domain $U$ is \emph{a normal neighborhood} of $x \in U$ if
\begin{align*}
  \overline{U} \cap f \inv (f(x)) = \{ x \}.
\end{align*}
If $Y$ is a metric space, then we denote by  $U(x,f,r)$ the component of the open set $f \inv (B_Y(f(x),r))$ containing $x$.
The existence of arbitrarily small normal neighborhoods 
is essential for the theory of branched covers. The following lemma
guarantees the existence of normal domains, the proof can be found in \cite[Lemma I.4.9, p.19]{Rickman-book} (see also \cite[Lemma 5.1.]{Vaisala}).
\begin{lemma}\label{lemma:TopologicalNormalDomainLemma}
  Let $X$ and $Y$ be locally compact complete path-metric
  spaces and $f \colon X \to Y$ a branched cover.
  Then for every point $x \in X$ 
  there exists a radius $r_0 > 0$ such that
  $U(x,f,r)$ is a normal neighborhood of $x$ for
  any $r \in (0,r_0)$.
  Furthermore,
  \begin{align*}
    \lim_{r\to 0}\diam U(x,f,r) = 0.
  \end{align*}
\end{lemma}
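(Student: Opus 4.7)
The plan is to construct the normal neighborhood by first localizing near $x$ using the discreteness of $f$ and then verifying the normal domain condition through a connectedness argument. Since $f$ is discrete, the fiber $f\inv(f(x))$ is discrete in $X$, and since $X$ is locally compact I pick an open neighborhood $W$ of $x$ with $\overline{W}$ compact and $\overline{W} \cap f\inv(f(x)) = \{x\}$. The boundary $\partial W$ is compact, so $f(\partial W)$ is compact in $Y$ and does not contain $f(x)$; hence there exists $r_0 > 0$ with $B_Y(f(x),r_0) \cap f(\partial W) = \emptyset$. For any $r \in (0,r_0)$, the set $U = U(x,f,r)$ is a connected subset of $f\inv(B_Y(f(x),r))$ which cannot meet $\partial W$ (any such point would have $f$-image both in $B_Y(f(x),r)$ and in $f(\partial W)$), so $U$ lies in the disjoint union $W \sqcup (X \setminus \overline{W})$; connectedness together with $x \in U \cap W$ forces $U \subset W$, so $\overline{U} \subset \overline{W}$ is compact and $\overline{U} \cap f\inv(f(x)) = \{x\}$.

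The main step is verifying $\partial f(U) = f(\partial U)$. I first show $f(\partial U) \subset \partial B_Y(f(x),r)$: if $y \in \partial U$ satisfied $f(y) \in B_Y(f(x),r)$, then the component of $y$ in the open set $f\inv(B_Y(f(x),r))$ would be open, would meet $U$ (since $y \in \overline{U}$), and hence would coincide with $U$, contradicting $y \notin U$. Next I show $f(U) = B_Y(f(x),r)$: openness of $f$ makes $f(U)$ open in $B_Y(f(x),r)$, while the compactness of $\overline{U}$ combined with the previous step gives $f(U) = f(\overline{U}) \cap B_Y(f(x),r)$, which is closed in $B_Y(f(x),r)$; since balls in a path-metric space are connected (any two points are joined by a path whose length is arbitrarily close to their distance, and such a path stays inside the ball), $f(U) = B_Y(f(x),r)$ and $\partial f(U) = \partial B_Y(f(x),r)$. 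Combined with the first inclusion this gives $f(\partial U) \subset \partial f(U)$, and for the reverse inclusion I approximate any $y \in \partial B_Y(f(x),r)$ by interior points $y_n \to y$ (again using the path-metric structure of $Y$), write $y_n = f(z_n)$ with $z_n \in U$, and extract a convergent subsequence $z_{n_k} \to z \in \overline{U}$; since $f(z) = y \notin f(U)$ we must have $z \in \partial U$, so $y \in f(\partial U)$.

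The diameter claim follows at once by applying the entire construction inside any preassigned open neighborhood of $x$: if $\{W_n\}$ is a neighborhood basis at $x$ with $\diam W_n \to 0$, the above yields radii $r_n \downarrow 0$ such that $U(x,f,r) \subset W_n$ for $r < r_n$, whence $\diam U(x,f,r) \to 0$. I expect the main obstacle to be the equality $f(U) = B_Y(f(x),r)$, where both the openness of $f$ and the connectedness of small balls (i.e.\ the path-metric hypothesis on $Y$) play essential roles; without either, the component $U$ could in principle map onto a proper open-and-closed subset of the target ball and fail the normal domain condition.
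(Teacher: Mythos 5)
The paper gives no proof of this lemma of its own: it is quoted from the literature, with the proof delegated to \cite[Lemma I.4.9]{Rickman-book} and \cite[Lemma 5.1]{Vaisala}. Your argument is correct and is essentially the classical one found there: isolate $x$ in its fiber $f\inv(f(x))$ using discreteness and local compactness, shrink $r$ so that $B_Y(f(x),r)$ misses the compact set $f(\partial W)$, trap $U(x,f,r)$ inside $W$ by connectedness, and then obtain $\partial f(U) = f(\partial U)$ via the open-and-closed argument showing $f(U) = B_Y(f(x),r)$; the diameter claim by rerunning the construction inside a shrinking neighborhood basis is also the standard conclusion. Each step checks out, including the subtler ones: $f(U) = f(\overline{U}) \cap B_Y(f(x),r)$ is closed in the ball because $f(\partial U)$ lands on the sphere, and the extraction of $z \in \partial U$ in the reverse inclusion uses only compactness of $\overline{U}$ and $f(z) \notin f(U)$.

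One point you should make explicit rather than tacit: you repeatedly use that $U(x,f,r)$, and more generally any component of the open set $f\inv(B_Y(f(x),r))$, is \emph{open} --- e.g.\ when deriving a contradiction from $y \in \partial U$ with $f(y) \in B_Y(f(x),r)$, and when writing $\partial U = \overline{U} \setminus U$. This requires $X$ to be locally connected, which follows from the same observation you record for $Y$, namely that balls in a path-metric space are path-connected; so the path-metric hypothesis is needed on $X$ as well as on $Y$, and without it the component $U$ need not be open and the boundary bookkeeping would break down. It is a justification gap rather than a mathematical one. You might also note that completeness of $X$ and $Y$ is never invoked in your argument --- local compactness alone suffices for the compactness extractions --- so your proof is in this respect marginally more economical than the quoted hypotheses.
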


We prove several auxiliary results for a proper map from a domain $\Omega$ to $\R^n$
that are usually proven to hold when
a branched covering is restricted to a normal domain.
The proofs are essentially identical, since often the restriction to a normal
domain is needed only to guarantee that the map is proper. Our first lemma is a classical equidistribution result. For the
terminology and properties of the topological index we refer to
\cite[Section 4, p.16]{Rickman-book}. This lemma is a slight natural extension of
\cite[Proposition 4.10.(1), p.19]{Rickman-book} and \cite[Lemma 3.3]{Vuorinen2}.
\begin{lemma}\label{lemma:Equidistribution}
  Let $\Omega \subset \R^n$ be a domain and suppose
  $f \colon \Omega \to f(\Omega) \subset \R^n$ is a proper branched cover.
  Then 
  $$N(f) \colonequals \sup_{w \in f(\Omega)} \# (\Omega \cap f \inv(w)) < \infty \, ,$$
  and for any two points $y,z \in f(\Omega) \setminus f(B_f)$,
  \begin{align*}
    \# (\Omega \cap f \inv(y))
    = \# (\Omega \cap f \inv(z)).
  \end{align*}
\end{lemma}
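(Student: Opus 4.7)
The plan is to obtain both conclusions at once by showing that the integer-valued function
\[
\mu(w) \colonequals \sum_{x \in f\inv(w)} i(x, f),
\]
where $i(x,f)$ denotes the topological index, is constant on $f(\Omega)$. The cardinality statements then follow because each $i(x,f) \geq 1$, with equality when $x \notin B_f$. Note first that $\mu$ is well defined: for every $w \in f(\Omega)$ the preimage $f\inv(w)$ is compact (by properness) and discrete (by discreteness of $f$), hence finite.

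The core of the argument is to show $\mu$ is locally constant. Fix $w_0 \in f(\Omega)$ with $f\inv(w_0) = \{x_1, \ldots, x_k\}$ and use Lemma~\ref{lemma:TopologicalNormalDomainLemma} to pick pairwise disjoint normal neighborhoods $V_i$ of the points $x_i$. Properness supplies a radius $r > 0$ with $f\inv(\overline{B(w_0, r)}) \subset V_1 \cup \cdots \cup V_k$; otherwise a convergent subsequence argument applied to preimages of a sequence $w_n \to w_0$ would produce an extra preimage of $w_0$ outside $\bigcup_i V_i$, a contradiction. Letting $U_i$ be the component of $f\inv(B(w_0, r))$ containing $x_i$, connectedness forces $U_i \subset V_i$, and a standard clopen argument inside the connected set $B(w_0, r)$ shows that each component of $f\inv(B(w_0, r))$ contains some $x_i$; hence $f\inv(B(w_0, r)) = U_1 \sqcup \cdots \sqcup U_k$ with each $U_i$ a normal domain of $f$ mapping onto $B(w_0, r)$. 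The cited result \cite[Proposition 4.10(1)]{Rickman-book} then gives $\mu(w, f, U_i) = i(x_i, f)$ for every $w \in B(w_0, r)$, and summing over $i$ yields $\mu(w) = \mu(w_0)$ throughout $B(w_0, r)$.

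Since $f(\Omega)$ is open and connected (an open map sends a domain to a domain), the locally constant, integer-valued function $\mu$ equals some constant $M$. Then $\# f\inv(w) \le \mu(w) = M < \infty$ for every $w \in f(\Omega)$, yielding $N(f) = M$; and for $w \in f(\Omega) \setminus f(B_f)$ every preimage satisfies $i(x, f) = 1$, so $\# f\inv(w) = \mu(w) = M$, giving the equidistribution. The main technical obstacle I expect is to verify rigorously that each $U_i$ is indeed a normal domain of $f$ mapping surjectively onto $B(w_0, r)$, so that Rickman's local-degree statement applies without modification; both the existence of the normal neighborhoods $V_i$ from Lemma~\ref{lemma:TopologicalNormalDomainLemma} and the properness of $f$ do the essential work in this step.
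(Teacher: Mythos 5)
Your proof is correct, but it takes a genuinely different route from the paper's. The paper disposes of the statement in a few lines by citing \cite[Lemma 3.3]{Vuorinen2}, which already contains all the analytic content: for a proper branched cover the sum $\sum_{x \in f\inv(y)} \abs{i(x,f)}$ is finite and independent of $y \in f(\Omega)$, so the paper only needs to observe that $\abs{i(x,f)} = 1$ for preimages of points off $f(B_f)$. What you have written is, in effect, a self-contained proof of that cited lemma: the properness-based inclusion $f\inv(\overline{B(w_0,r)}) \subset \bigcup_i V_i$, the verification that each component $U_i$ of $f\inv(B(w_0,r))$ is a normal neighborhood of some $x_i$ mapping onto $B(w_0,r)$, and the identity $\mu(w,f,U_i) = i(x_i,f)$ from \cite[Proposition 4.10, p.~19]{Rickman-book} together yield local constancy of $\mu$, and connectedness of the open set $f(\Omega)$ finishes the argument. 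Your approach buys independence from Vuorinen's paper at the cost of length; the steps you flag as delicate are all sound, and the clopen/surjectivity argument for the components of $f\inv(B(w_0,r))$ is exactly the device the paper itself invokes (without detailed proof) inside the proof of Lemma~\ref{lemma:NoBranchProperIsCover}, there for components of $f\inv(V_y)$.

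Two small repairs, neither a genuine gap. First, your inequality $i(x,f) \geq 1$ presumes $f$ is sense-preserving; a branched covering may instead be sense-reversing, in which case $i(x,f) \leq -1$ everywhere. Replace $i(x,f)$ by $\abs{i(x,f)}$ throughout (as the paper does, see its footnote on the sense-preserving convention), or normalize $f$ by composing with a reflection. Second, your identification $N(f) = M$ tacitly uses that $f(\Omega) \setminus f(B_f)$ is nonempty, so that the supremum is attained; this is true since $f(B_f)$ has topological dimension at most $n-2$, but for the lemma as stated the bound $N(f) \leq M < \infty$ already suffices.
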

\begin{proof}
  Fix the points $y, z \in f(\Omega) \setminus f(B_f)$. Then for the local index of an arbitrary point $x \in f^{-1}(y) \cup f^{-1}(z)$ we have
  $$\abs{i(x,f)} = 1.$$ 
  Thus, by \cite[Lemma 3.3]{Vuorinen2} we get $N(f) < \infty$, and
  \begin{align*}
    \# (\Omega \cap f \inv(y)) &= \sum_{x \in f^{-1}(y)} \abs{i(x,f)} = N(f) \\
                               &= \sum_{x \in f^{-1}(z)} \abs{i(x,f)} = \# (\Omega \cap f \inv(z)),
  \end{align*}	
  and the claim follows.
\end{proof}

Next we show that a proper branched covering with an empty branch is a covering map.
Se e.g.\ \cite{Ho} for further results related to the topic.
\begin{lemma}\label{lemma:NoBranchProperIsCover}
  Let $\Omega \subset \R^n$ be a domain and suppose
  $f \colon \Omega \to f(\Omega) \subset \R^n$ is a proper branched cover
  such that $B_f = \emptyset$. Then $f$ is a covering map.
\end{lemma}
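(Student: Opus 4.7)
The plan is to exhibit, for each $y \in f(\Omega)$, an evenly covered neighborhood of $y$. Since $B_f = \emptyset$, the set $f(B_f)$ is empty, so by Lemma \ref{lemma:Equidistribution} the preimage $f^{-1}(y)$ has exactly $N(f) < \infty$ points, say $f^{-1}(y) = \{x_1, \ldots, x_k\}$ with $k = N(f)$. Using Lemma \ref{lemma:TopologicalNormalDomainLemma} at each $x_i$, choose $r > 0$ small enough that $U_i \colonequals U(x_i, f, r)$ is a normal neighborhood of $x_i$ for every $i = 1, \ldots, k$.

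These normal neighborhoods are automatically pairwise disjoint: if $U_i$ and $U_j$ intersect, then as components of $f^{-1}(B(y, r))$ they coincide, but the normal-neighborhood defining condition $\overline{U_i} \cap f^{-1}(y) = \{x_i\}$ would then be contradicted by $x_j \in U_i = U_j$. Next, I would argue that each restriction $f|_{U_i} \colon U_i \to B(y, r)$ is a homeomorphism: by construction $U_i$ is a normal domain, so $f|_{U_i}$ is a proper branched covering onto $B(y, r)$, and since $B_{f|_{U_i}} \subset B_f = \emptyset$, applying Lemma \ref{lemma:Equidistribution} to $f|_{U_i}$ gives $N(f|_{U_i}) = \#(U_i \cap f^{-1}(y)) = 1$. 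Hence $f|_{U_i}$ is a continuous open bijection, i.e.\ a homeomorphism onto $B(y, r)$.

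The main remaining point is to verify $f^{-1}(B(y, r)) = U_1 \sqcup \cdots \sqcup U_k$; this is the step where one might initially worry that some unaccounted-for component of $f^{-1}(B(y, r))$ could contain additional preimages mapping into $B(y, r)$. However, Lemma \ref{lemma:Equidistribution} applied to the original map $f$ tells us that every point $y' \in B(y, r) \subset f(\Omega)$ has precisely $k$ preimages in $\Omega$. Since each $U_i$ contributes exactly one preimage of $y'$ (being a homeomorphism onto $B(y, r)$) and the $U_i$ are disjoint, these $k$ points already exhaust $f^{-1}(y')$. Thus no other component can meet $f^{-1}(B(y, r))$, establishing $f^{-1}(B(y, r)) = \bigsqcup_{i=1}^{k} U_i$. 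This is exactly the evenly-covered neighborhood condition, so $f$ is a covering map.
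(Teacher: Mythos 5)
Your proof is correct, but it runs on different machinery than the paper's. The paper also starts from Lemma \ref{lemma:Equidistribution} to get the finite fiber $\{x_1,\dots,x_k\}$, but then proceeds elementarily: since $B_f=\emptyset$, each $x_i$ has a ball $B_i$ on which $f$ is injective; the images $f(B_i)$ are intersected to produce a common open set $W_y$, a small ball $V_y\Subset W_y$ is chosen, and properness is used \emph{directly} to show that every component of $f^{-1}(V_y)$ maps surjectively onto $V_y$ --- hence every component contains a point of $f^{-1}(y)$ and sits inside some $B_i$, which simultaneously rules out extra components and gives the homeomorphism property. You instead route both of those steps through counting: the exact fiber cardinality $k$ from Lemma \ref{lemma:Equidistribution} (valid everywhere since $f(B_f)=\emptyset$) exhausts $f^{-1}(y')$ for each $y'\in B(y,r)$, which is a clean way to exclude stray components, and a second, local application of Lemma \ref{lemma:Equidistribution} to $f|_{U_i}$ yields injectivity. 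The one point where you lean on facts not literally contained in the paper's statement of Lemma \ref{lemma:TopologicalNormalDomainLemma} is the claim that $f|_{U_i}\colon U_i\to B(y,r)$ is a \emph{proper} branched cover \emph{onto the full ball}: the lemma as quoted only asserts that $U(x_i,f,r)$ is a normal neighborhood. Both missing ingredients are standard and available: surjectivity $f(U(x_i,f,r))=B(y,r)$ and properness of the restriction follow from normality (compact containment plus $f(\partial U_i)\subset\partial B(y,r)$, so $f(U_i)$ is open and relatively closed in the connected ball, and preimages of compacta in $B(y,r)$ stay away from $\partial U_i$); they are part of the cited source \cite[Lemma I.4.9]{Rickman-book}, and the paper itself asserts in Section \ref{sec:Preli} that restrictions to normal domains are proper. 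So your argument is complete modulo citing those standard normal-neighborhood facts explicitly; what it buys is a uniform counting argument in place of the paper's hands-on surjectivity-of-components step, at the cost of invoking heavier normal-domain machinery where the paper gets by with injectivity balls.
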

\begin{proof}
  Fix a point $y \in f(\Omega)$. Then by Lemma~\ref{lemma:Equidistribution} 	
  $$N(f) = \# (\Omega \cap f \inv(y)) < \infty.$$
  By denoting $m \colonequals N(f)$ we may write
  $$f^{-1}(y) = \{ x_1,x_2, \ldots, x_m\}$$
  for some distinct points $x_1,x_2, \ldots, x_m$ in $\Omega$. Next, consider $x_i$-centric open balls
  $$B_i \colonequals B(x_i, r_i) \subset \Omega, \quad i=1,2, \ldots, m \, ,$$
  with positive radii and such that the restriction
  $$f_i = f|_{B_i} \colon B_i \to f(B_i)$$
  is a homeomorphism for every $i=1, \ldots, m$. Such a ball exists for every $x_i$ as $B_f = \emptyset$. Set
  $$W_y = \bigcap_{i=1}^{m} f(B_i).$$
  Then by the openness of $f$ and definition of the balls $B_i$ it follows that $W_y$ is open and it contains the point $y$. Take a ball
  $$V_y \colonequals B(y,\delta) \Subset W_y,$$
  and for each $i=1, \ldots, m$ set $U_i \colonequals f^{-1}(B_y)$.

  Since $\overline{V_y}$ is compactly contained in $f(\Omega)$ and $f$ is proper, each pre-image component of $f \inv (V_y)$ maps
  surjectively onto $V_y$.
  Thus $f^{-1}(V_y)$ is a disjoint union of the open sets $U_i$, and the restrictions
  $$f|_{U_i} \colon U_i \to V_y, \quad i=1, \ldots, m,$$
  are homeomorphisms. Thus, $f$ is a covering map.
\end{proof}

Finally, a fundamental technique in the study of branched covers is the path-lifting.
For the terminology and basic theory of this technique, we refer to
\cite[Section 3, p.32]{Rickman-book}\footnote{We remark that for their path-lifting methods,
  \cite{Rickman-book} assumes that the map is sense-preserving. This is merely a notational
  convention as a continuous, open and discrete mapping is always either sense-preserving or
  sense-reversing, see \cite[p. 18]{Rickman-book}.}.
\begin{lemma}\label{lemma:PathLifting}
  Let $\Omega \subset \R^n$ be a domain and suppose
  $f \colon \Omega \to f(\Omega) \subset \R^n$ is a proper branched cover.
  Then for any $\beta \colon [0,1] \to f(\Omega)$ and any
  $x \in \Omega \cap f \inv ( \beta(0) )$ there
  exists a path $\alpha \colon [0,1] \to \Omega$ for which
  $f \circ \alpha = \beta$ and $\alpha(0) = x$.
  Such a path is called a lift of $\beta$ (under $f$).
\end{lemma}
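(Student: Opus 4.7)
The plan is to establish path-lifting by first proving the statement locally in a single normal neighborhood, and then piecing together local lifts via a maximality argument that uses properness crucially.

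Concretely, I would first set $K \colonequals f \inv (\beta([0,1]))$. By properness $K$ is compact in $\Omega$, so any lift we produce has image confined to $K$. I then define
\[
  T \colonequals \sup \{ t \in [0,1] : \text{there exists a continuous } \alpha \colon [0,t] \to \Omega \text{ with } \alpha(0) = x \text{ and } f \circ \alpha = \beta|_{[0,t]} \},
\]
and aim to show (i) the supremum is attained and (ii) $T = 1$. Since partial lifts are closed under extension, a Zorn's lemma argument (or a direct inductive construction) produces a lift $\alpha$ defined on a maximal subinterval of the form $[0,T)$ or $[0,T]$. The entire proof then boils down to two claims: a local lifting statement inside a single normal neighborhood, and the ability to extend past the time $T$ whenever $T < 1$.

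For the local statement I would work in a normal neighborhood $U = U(p,f,r)$ of a point $p \in \Omega$ provided by Lemma~\ref{lemma:TopologicalNormalDomainLemma}, with $f|_U$ proper onto $f(U)$ and $\overline U \cap f\inv(f(p)) = \{p\}$. Given a path $\gamma \colon [0,s] \to f(U)$ with $\gamma(0) = f(p)$, I would define $s^* \colonequals \sup\{ t \in [0,s] : \text{a lift exists on } [0,t] \text{ with values in } U \}$, and argue it equals $s$. Assuming $s^* \le s$, the partial lift $\tilde\gamma \colon [0,s^*) \to U$ has image in the compact set $\overline U$; its set of accumulation points as $t \to s^*-$ is nonempty, connected (as a cluster set of a connected set), and contained in the fiber $\overline U \cap f\inv(\gamma(s^*))$. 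By Lemma~\ref{lemma:Equidistribution} applied to $f|_U$ this fiber is finite, hence discrete, so the cluster set reduces to a single point $q$. The normal domain identity $f(\partial U) = \partial f(U)$ combined with $\gamma(s^*) \in f(U)$ (an open set) forces $q \in U$, so $\tilde\gamma$ extends continuously to $s^*$; invoking Lemma~\ref{lemma:TopologicalNormalDomainLemma} at $q$ then lets one continue past $s^*$ (if $s^* < s$), contradicting maximality.

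With the local lifting lemma in hand, the passage from $[0,T)$ to $[0,T]$ uses exactly the same cluster-set argument applied globally: the accumulation set of $\alpha(t)$ as $t \to T-$ lies in the compact $K$ and in the discrete fiber $f\inv(\beta(T))$, and connectedness forces it to be a single point $p_T$, which we set as $\alpha(T)$. If $T < 1$, then the local lifting in a normal neighborhood of $p_T$ (shrunk so that its $f$-image covers $\beta([T, T+\eta])$ for some $\eta > 0$) yields an extension to $[0, T+\eta]$, contradicting the definition of $T$. Therefore $T = 1$ and the lift is defined on all of $[0,1]$. The main technical obstacle I anticipate is the cluster-set/connectedness step: one must carefully justify that the set of subsequential limits of $\alpha(t)$ is connected (so that discreteness of the fiber collapses it to a point), and that the normal domain property prevents this limit from lying on $\partial U$. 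Both points rely in an essential way on properness of $f$, which is precisely why properness — rather than the weaker discreteness/openness — appears in the hypothesis.
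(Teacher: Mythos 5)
Your global skeleton is sound: defining $T$ as the supremum of liftable times, confining the lift to the compact set $f\inv(\abs{\beta})$ via properness, and collapsing the cluster set of $\alpha(t)$ as $t \to T^-$ to a single point (it is a nested intersection of continua, hence compact and connected, and it sits inside a discrete fiber) is a correct, and in fact more detailed, version of the endpoint-extension step in the paper's proof, where the existence of $\lim_{t\to b}\gamma(t)$ is asserted from properness. But the paper does not try to prove local lifting at all: it invokes Rickman's Corollary II.3.3 (existence of maximal lifts of a path from any prescribed starting point) and uses properness only to rule out the two ways a maximal lift could fail to be total. That corollary is precisely where the ability to \emph{continue} a lift from a given point lives.

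This is where your proposal has a genuine gap: your proof of the local lifting lemma is circular. The cluster-set argument shows the set of admissible times is \emph{closed} (the supremum $s^*$ is attained, and the normal-domain identity $f(\partial U) = \partial f(U)$ correctly keeps the limit point $q$ off $\partial U$), but the \emph{openness} half --- continuing the lift past $s^*$, past $T$, or indeed getting the lift started on an interval of positive length at the basepoint $p$ when $p \in B_f$ --- is supplied only by the phrase ``invoking Lemma~\ref{lemma:TopologicalNormalDomainLemma} at $q$,'' and that lemma furnishes nothing but the existence of normal neighborhoods; it says nothing about lifting paths inside them, which is exactly the statement under proof. At a branch point $q$ the map is not locally invertible, so there is no elementary branch-choosing argument; the existence of a lift on a nondegenerate interval starting at $q$ is the genuinely nontrivial content of the path-lifting theorem for discrete open maps (Rickman II.3.2--3.3, resting on the Martio--Rickman--V\"ais\"al\"a theory and ultimately on Floyd--Whyburn type results; one standard route perturbs the path off $f(B_f)$, using that $B_f$ has topological dimension at most $n-2$, and passes to a limit of honest covering-space lifts). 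Without importing such a result, your maximality scheme never closes. Two minor points: appealing to Lemma~\ref{lemma:Equidistribution} for finiteness of the fiber is unnecessary (discreteness of $f$ plus compactness of $\overline{U}$ already reduces the connected cluster set to a point), and your identification of properness as the hypothesis that confines the lift to a compactum is correct and matches the paper's use of it.
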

\begin{proof}
  By \cite[Corollary 3.3., p.34]{Rickman-book} 
  there exists a maximal lift $\gamma \colon I \to \Omega$ of
  $\beta$ such that $\gamma(0) = x$, where $I$ is a subinterval of $[0,1]$ containing $0$. We need to show
  that $I = [0,1]$. Towards contradiction suppose not, and assume first that $I$ is a closed interval
  $I = [0,a]$. But now since $a < 1$ we may take a maximal lift
  of $\beta|_{[a,1]}$ starting from the point $\gamma(a)$ and concatenate this lift to $\gamma$. This contradicts
  the maximality of the lift $\gamma$, and we deduce that $I$ must be open, i.e.\ of the form $[0,b)$.
  But now we note that by continuity of $f$, $|\gamma| \subset f \inv (| \beta |)$, and by the properness of
  $f$ we have that $f \inv (| \beta |)$ is a compact set. Thus we may conclude that the limit
  $\lim_{t \to b} \gamma(t)$ exists and is contained in $f\inv(\beta(b))$, and so we may extend the lift
  $\gamma$ to the closed interval $[0,b]$. This is again a contradiction with the maximality of $\gamma$, and
  so the original claim holds true; $I = [0,1]$ and we may choose $\alpha = \gamma$.
\end{proof}

\section{Main Theorems}
\label{sec:Main}

In this section we prove our main results, Theorem \ref{thm:VuorinenDim3} and Theorem \ref{thm:VuorinenNoBranch}.

\subsection{The results in dimension three}

We begin with a lemma that gives rise to a useful collection of large normal domains in the setting of the Vuorinen question.

\begin{lemma}\label{lemma:VuorinenDim3Lemma}
  Let $f \colon \Omega \to f(\Omega) \subset \R^n$ be a proper branched covering with $\Omega \subset \R^n$
  a domain. Let $K \subset f (\Omega)$ be a non-empty compact set and denote $C \colonequals f \inv (K) \subset \Omega$.
  Suppose $V \subset \Omega$ is a domain such that $C \subset V$,
  and denote $U \colonequals f \inv (f (V))$. Then
  \begin{enumerate}[(a)]
  \item if $\overline{V} \subset \Omega$ then $U$ is a normal domain for $f,$\label{part:NormalDomain}
  \item $U$ is connected and $f(U) = f(V)$,\label{part:Connected}
  \item $f|_{U} \colon U \to f(U)$ is a proper branched cover, and\label{part:ProperMap1}
  \item $f|_{\Omega \setminus \overline{U}} \colon \Omega \setminus \overline{U} \to f(\Omega) \setminus f(\overline{U})$
    is a proper branched cover
    with $\Omega \setminus \overline{U}$ connected.
    Moreover we have $ \Omega \setminus \overline{U}
    = f \inv (f (\Omega \setminus \overline{U})).$\label{part:ProperMap2}
  \end{enumerate}
\end{lemma}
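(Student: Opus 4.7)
The plan is to exploit throughout that $U = f^{-1}(f(V))$ is saturated under $f$, namely $f^{-1}(f(U)) = U$, and to combine this with openness and properness of $f$. I would prove the parts in the order (b), (c), (a), (d); the last part needs the normal-domain conclusion of (a).

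For (b), the identity $f(U) = f(V)$ is immediate from $V \subset U$. For connectedness of $U$, let $W$ be any component. The restriction $f|_W \colon W \to f(V)$ is proper: for a compact $L \subset f(V) = f(U)$, saturation gives $f^{-1}(L) \subset U$, which is compact in $\Omega$ by properness of $f$, and $f^{-1}(L) \cap W$ is clopen in $f^{-1}(L)$ (since $W$ is clopen in $U$), hence compact. Being proper and open, $f|_W$ is closed, so $f(W)$ is clopen in the connected set $f(V)$, forcing $f(W) = f(V) \supset K$. Thus $W$ contains a point of $f^{-1}(K) = C \subset V$, so $W$ meets $V$; as $V$ is connected and lies in a unique component of $U$, there is only one. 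Part (c) then is immediate: continuity, openness, and discreteness are inherited from $f$, and properness of $f|_U \colon U \to f(U)$ follows from saturation as above.

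For (a), assume $\overline{V} \subset \Omega$. Then $f(\overline{V})$ is compact in $f(\Omega)$, so by properness $f^{-1}(f(\overline{V}))$ is compact in $\Omega$, and since $U \subset f^{-1}(f(\overline{V}))$ we obtain $\overline{U}$ compact in $\Omega$ and $f(\overline{U}) = \overline{f(U)}$. The normal-domain identity $\partial f(U) = f(\partial U)$ follows by a double-inclusion via saturation: any $x \in \partial U$ satisfies $x \notin U$, so $f(x) \notin f(U)$, giving $f(\partial U) \subset f(\overline{U}) \setminus f(U) = \partial f(U)$; conversely, any $y \in \partial f(U)$ has a preimage in $\overline{U}$ that cannot lie in $U$, hence lies in $\partial U$.

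For (d) (again with $\overline{V} \subset \Omega$), the crux is the saturation $\Omega \setminus \overline{U} = f^{-1}(f(\Omega \setminus \overline{U}))$, which reduces to $f(\Omega \setminus \overline{U}) \cap f(\overline{U}) = \emptyset$. Using (a), $f(\overline{U}) = f(U) \cup \partial f(U)$. Given $z \in \Omega \setminus \overline{U}$: first, $f(z) \notin f(U)$, else $z \in U$ by saturation of $U$; second, $f(z) \notin \partial f(U)$, for picking a neighborhood $N_z \subset \Omega \setminus \overline{U}$ of $z$, openness of $f$ makes $f(N_z)$ an open set around $f(z)$, and if $f(z) \in \partial f(U)$ then $f(N_z)$ would meet $f(U) = f(V)$, forcing some point of $N_z$ to lie in $f^{-1}(f(V)) = U$, a contradiction. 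Once saturation is in hand, properness of $f|_{\Omega \setminus \overline{U}}$ onto $f(\Omega) \setminus f(\overline{U})$ is immediate: the preimage of a compact $L$ in the target is compact in $\Omega$ and, by saturation, lies entirely in $\Omega \setminus \overline{U}$.

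The main obstacle is the connectedness of $\Omega \setminus \overline{U}$. The natural route is again the proper-open-closed trichotomy: for any component $W'$, the restriction $f|_{W'}$ is proper and open, so $f(W')$ is clopen in $f(\Omega) \setminus f(\overline{U})$. If the target is connected, each component of $\Omega \setminus \overline{U}$ surjects onto it, after which a fiber-counting bound from Lemma~\ref{lemma:Equidistribution} (using $\# f^{-1}(y) \le N(f) < \infty$) restricts the number of components. Collapsing this to a single component seems to require additional topological input on the shape of $\overline{U}$ inside $\Omega$, and this is where I expect the delicate part of the argument to be.
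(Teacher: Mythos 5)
Your proofs of (a), (b), (c) and of the saturation and properness claims in (d) are correct, and in several places you take a genuinely different route from the paper. For the connectedness of $U$ the paper uses total path-lifting (Lemma \ref{lemma:PathLifting}): given $x \in U$ it joins $f(x)$ to $f(K)$ by a path in $f(V)$, lifts the path from $x$, and observes that the lift stays in $U$ by saturation and ends in $C \subset V$. Your alternative -- each component $W$ has a proper, hence closed, restriction $f|_W$, so $f(W)$ is clopen in the connected set $f(V)$, whence $f(W)=f(V) \supset K$ and $W$ meets $C \subset V$ -- is equally valid and avoids the lifting machinery entirely. Your double inclusion for $\partial f(U) = f(\partial U)$, resting on compactness of $\overline{U}$ and $f(\overline{U}) = \overline{f(U)}$, is cleaner than the paper's contradiction argument, and you additionally verify the compact containment of $U$ required by the definition of a normal domain, which the paper leaves implicit. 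Likewise your disjointness statement $f(\Omega \setminus \overline{U}) \cap f(\overline{U}) = \emptyset$ yields the saturation identity in (d) more transparently than the paper's double application of \eqref{eq:BoundaryForAnnularDomain}.

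The one step you leave open, the connectedness of $\Omega \setminus \overline{U}$, is precisely the step the paper does not prove either: its proof of (d) simply treats $\Omega \setminus \overline{U}$ as a domain and establishes only saturation and properness. Your suspicion that additional topological input is needed is justified, and in fact no argument could exist at this level of generality, because the claim is false as stated. Take $n=3$, $\Omega = B^3(0,10)$, $f = \id$ (a proper branched covering with $B_f = \emptyset$), $K = \{0\}$, and let $V$ be a small ball about the origin joined by a thin open tube to the open spherical shell $\{x : 4.9 < \abs{x} < 5.1\}$. Then $V$ is a domain with $C = \{0\} \subset V$ and $\overline{V} \subset \Omega$, and $U = f\inv(f(V)) = V$, but $\overline{U}$ contains the closed shell, so $\Omega \setminus \overline{U}$ is disconnected. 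Hence connectedness requires restricting $V$ (in the paper's application $V$ is a round ball $B_r$ containing $f\inv(f(B_f))$), and even in that setting it is never verified: the proof of Proposition \ref{prop:NoTorsionImpliesVuorinen} cites part (b) for the path-connectedness of $E_R = B \setminus \overline{U}_R$, which part (b) does not assert. Your fiber-counting idea does bound the number of components, since each component maps properly onto a clopen piece of $f(\Omega) \setminus f(\overline{U})$, but as the example shows nothing at this level of generality collapses them to one; you were right not to claim the step, and in flagging it you have identified a genuine gap in the paper's own argument.
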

\begin{proof}
  Note that since $V$ is a domain and $f$ an open map, $f(V)$ is open and so is its pre-image $U$ under the continuous map $f$.

  To prove part \eqref{part:NormalDomain} we need to show first that
  $\partial f(U) = f \partial (U)$ with $U$ connected.
  We note on the first hand that since $f$ is open, we have for any domain $A \subset \Omega$ that $\partial f(A) \subset f (\partial A)$,
  and so $\partial f(U) \subset f (\partial U)$. On the other hand, for any point $y \in f (\partial U)$ we note that
  if $U \cap f \inv ( y ) \neq \emptyset$, then $U$ is a neighborhood of one of the pre-images of $y$ and so by openness of
  $f$, $f(U)$ is a neighborhood of $y$ in $f(U)$ since $f(U) \subset V$. This implies that $U = f \inv (f(V))$ is a neighborhood of
  all of the points in the pre-image of $y$, and thus $\partial U \cap f \inv ( y ) = \emptyset$. This is a contradiction
  and so we must have that $U \cap f \inv ( y ) = \emptyset$, and so $y \in \partial f (U)$ since $y \in \overline{f(U)}$.
  
  Next, to see that $U$ is connected, we note that $f(V)$ is a connected domain containing $f(K)$ and so
  for any point $x \in U$ we may connect $f(x)$ and $f(K)$ with a path $\alpha \colon [0,1] \to f(V)$. Now by Lemma
  \ref{lemma:PathLifting} the path $\alpha$ has a lift $\tilde\alpha\colon [0,1] \to \Omega$ with $\tilde \alpha(0) = x$
  and by the definition of $U$,
  $|\tilde \alpha| \subset U$. On the other hand $\tilde \alpha(1) \in C \subset V$, and so each point $x \in U$ can be connected
  with a path to an interior point of the connected set $V$. This implies that $U$ is also connected, and now since $V \subset f \inv (f (V))$,
  we also see that $f(U) = f(V)$. This concludes the proof of parts \eqref{part:NormalDomain} and \eqref{part:Connected}.

  For part \eqref{part:ProperMap1} we first note that the restriction of a branched covering to a domain is a branched covering. To show the
  properness of the restriction $f|_{U} \colon U \to f(U)$ we fix a compact set $A \subset f(U)$ and note that $f \inv (A) \subset \Omega$
  is compact since $f$ is proper. Now as $U = f \inv (f(U))$, we have that $f \inv (A) \subset U$, and so we see that $(f|_U) \inv A$ is compact.
  Thus $f|_U$ is a proper map.

  Finally for part \eqref{part:ProperMap2} we again note that the restriction of a branched covering to a domain is a branched covering.
  Since by part \eqref{part:NormalDomain} we have $\partial f(U) = f (\partial U)$, we see that also
  \begin{align}\label{eq:BoundaryForAnnularDomain}
    f (\partial (\Omega \setminus \overline{U} ))
    = \partial f (\Omega \setminus \overline{U}),
  \end{align}
  where the boundary is taken relative to the domain $f(\Omega)$. As in part \eqref{part:ProperMap1}, the properness will follow
  after we show that
  \begin{align*}
    \Omega \setminus \overline{U}
    = f \inv (f (\Omega \setminus \overline{U})).
  \end{align*}
  The inclusion
  \begin{align*}
    \Omega \setminus \overline{U}
    \subset f \inv (f (\Omega \setminus \overline{U}))
  \end{align*}
  is trivial, so fix a point $x \in f \inv (f (\Omega \setminus \overline{U}))$.
  Suppose, towards contradiction, that $x \notin \Omega \setminus \overline{U}$. Then necessarily $x \in \overline{U}$,
  whence either $x \in \partial U$ or $x \in U$. In the first case, we would have
  by applying \eqref{eq:BoundaryForAnnularDomain} twice that $x \in \partial (\Omega \setminus \overline{U})$,
  which is not possible as we chose $x$ from within the domain $\Omega \setminus \overline{U}$.
  In the second case, if $x \in U$, then by the definition of $U$, $f \inv ( f(x) ) \subset U$, and
  so $x \notin \Omega \setminus \overline{U}$, which again goes against our assumptions. Thus we conclude
  that $x \in \Omega \setminus \overline{U}$ and so
  \begin{align*}
    \Omega \setminus \overline{U}
    \supset f \inv (f (\Omega \setminus \overline{U})).
  \end{align*}
  This concludes the proof of the claim.
\end{proof}

Our proof relies on deep results in low-dimensional topology, namely
Proposition \ref{prop:3dimNoTorsion}. For the statement of the result
we need some auxiliary concepts.
We refer to \cite{Hatcher} for basic definitions and theory of homotopy and
denote the homotopy groups of a space $X$ by $\pi_k(X)$ for $k \in \N$.
\begin{definition}\label{def:TorsionFreeAtInfinity}
  We say that a domain $\Omega \subset \R^n$ has \emph{torsion free fundamental group at infinity} if
  for any compact set $K \subset \Omega$ there exists a domain $V \supset K$ with $\overline{V} \subset \Omega$
  and such that $\pi_1(\Omega \setminus \overline{V})$ is torsion free; recall that
  a group is torsion free if no non-zero element $g$ has the property that $g^j = e$ for some $j \in \N$.
\end{definition}
The nomenclature of this definition is motivated by a similar definition of a space being \emph{simply connected at infinity};
see e.g.\ \cite{Edwards}.

The following proposition is the fundamental observation in the proof of our first main theorem.
We wish to emphasize that Proposition \ref{prop:NoTorsionImpliesVuorinen} is valid in all dimensions.
\begin{proposition}\label{prop:NoTorsionImpliesVuorinen}
  Let $f \colon B^n \to f(B^n) \subset \R^n$ be a proper branched covering with $n \ge 2$. Suppose that $f(B^n)$ has
  torsion free fundamental group at infinity. If $B_f$ is compact, then $f$ is a homeomorphism.
\end{proposition}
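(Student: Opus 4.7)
The plan is to use Lemma \ref{lemma:VuorinenDim3Lemma} to restrict $f$ to a finite-sheeted covering map over the ``ends'' of the target (where the torsion-free hypothesis can be invoked), and then to deduce that this covering is one-sheeted, so $N(f) = 1$ and $f$ is injective.

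First I set $C \colonequals f \inv ( f (B_f) )$, which is compact by properness. The torsion-free-at-infinity hypothesis applied to the compact set $f(B_f)$ yields a domain $V' \subset f(B^n)$ with $V' \supset f(B_f)$, $\overline{V'} \subset f(B^n)$, and $G \colonequals \pi_1(f(B^n) \setminus \overline{V'})$ torsion-free. Next I build a domain $V \subset B^n$ with $C \subset V$, $\overline{V} \subset B^n$ and $f(\overline V) = \overline{V'}$; such a $V$ can be obtained as a connected saturated neighborhood of the compact set $f \inv (\overline{V'})$ inside $B^n$. Setting $U \colonequals f \inv (f(V))$, Lemma \ref{lemma:VuorinenDim3Lemma} shows that $U$ is a normal domain with $f(\overline U) = \overline{V'}$, that the complement $B^n \setminus \overline U$ is connected, and that the restriction $f|_{B^n \setminus \overline U} \colon B^n \setminus \overline U \to f(B^n) \setminus \overline{V'}$ is a proper branched cover with empty branch set (since $B_f \subset C \subset V \subset U$). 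Hence by Lemma \ref{lemma:NoBranchProperIsCover} it is a covering map, and by Lemma \ref{lemma:Equidistribution} it has exactly $N(f)$ sheets. This cover corresponds to a subgroup $H = f_*(\pi_1(B^n \setminus \overline U)) \subset G$ of index $N(f)$.

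The final step, and the main obstacle, is to conclude that $H$ is trivial. If $H = 1$ then the cover is the universal cover of $f(B^n) \setminus \overline{V'}$, so $|G| = N(f) < \infty$; since $G$ is torsion-free and finite, $G = 1$ and hence $N(f) = 1$, so $f$ is injective and therefore a homeomorphism. To prove $H = 1$, I plan to show that the source $B^n \setminus \overline U$ is simply connected by enlarging $V$ toward $\partial B^n$. Any loop in $B^n \setminus \overline U$ is null-homotopic in the contractible $B^n$, and for $n \geq 3$ the identity $\pi_1(S^{n-1}) = 0$ ensures that a sufficiently thin collar of $\partial B^n$ is simply connected; the hope is to choose $V$ large enough that $B^n \setminus \overline U$ deformation-retracts onto such a collar. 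The delicate part is controlling the topology of $U = f \inv(f(V))$ as $V$ enlarges, since the preimage of a topological ball under a branched cover need not itself be a topological ball, and this is where the argument most substantively engages with the dimension of the ambient space.
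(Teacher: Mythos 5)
Your opening moves do track the paper's setup: saturating a neighborhood of $f\inv(f(B_f))$ via Lemma \ref{lemma:VuorinenDim3Lemma}, observing that the restriction of $f$ to the exterior $B^n \setminus \overline{U}$ is a proper branched cover with empty branch set, and concluding from Lemmas \ref{lemma:NoBranchProperIsCover} and \ref{lemma:Equidistribution} that it is an $N(f)$-sheeted covering of an end of $f(B^n)$. But your final step is a genuine gap, and not a repairable one in the form proposed. Since the covering has $N(f) < \infty$ sheets, $[G : H] = N(f)$, so $H = 1$ would force $\abs{G} = N(f) < \infty$. Hence proving that $B^n \setminus \overline{U}$ is simply connected is \emph{equivalent} to proving that the end of $f(B^n)$ has finite (and then, being torsion-free, trivial) fundamental group -- that is, to proving $f(B^n)$ is simply connected at infinity, a strictly stronger property than the torsion-freeness you are given. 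The hypothesis fully permits, say, $G \cong \Z$, in which case every finite-index subgroup $H$ is nontrivial and the source end over that region is \emph{never} simply connected, no matter how you enlarge $V$; and if you enlarge $V$ past $V'$, the target end changes and you forfeit the only torsion-freeness guarantee you have. Your concrete mechanism fails for the reason you yourself flag: $\overline{U} = f\inv(f(\overline{V}))$ can be knotted or wildly embedded (already for a homeomorphism $f$ a bad choice of $V$ produces a complement with nontrivial $\pi_1$), so $B^n \setminus \overline{U}$ need not retract onto a round collar. This ``delicate part'' is thus not a technicality but the whole difficulty -- compare the first unnumbered proposition of the introduction, where simple connectivity at infinity appears as an explicit extra hypothesis. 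There is also a smaller plumbing error: a connected saturated neighborhood $V$ of $f\inv(\overline{V'})$ has $f(\overline{V}) \supsetneq \overline{V'}$ in general, so identifying the base of your covering with $f(B^n) \setminus \overline{V'}$ does not hold as stated.

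The paper's proof never engages $\pi_1$ of the source end at all; instead it \emph{manufactures a torsion element} in the target end and lets the hypothesis kill it. With round radii $r_0 < s < R$ chosen so that $f\inv(f(B_f)) \subset B_{r_0}$, $K \subset f(B_s)$ (where $K$ comes from the torsion-free-at-infinity hypothesis) and $\overline{U}_s \subset B_R$ for $U_t \colonequals f\inv(f(B_t))$, the restriction to $E_R = B \setminus \overline{U}_R$ is a $k$-to-one covering. One takes an injective path $\alpha$ in $E_R$ joining two distinct points of a single fiber; the loop $\beta = f \circ \alpha$ is nontrivial in $\pi_1(f(B)\setminus K)$, since a null-homotopy would lift through the covering and contract $\alpha$ rel its distinct endpoints. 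One then concatenates successive lifts of $\beta$ until they close up after $m \le k$ steps into a loop $\gamma$ with $f \circ \gamma \simeq \beta^m$. The key point your proposal lacks: $\abs{\gamma} \subset B \setminus \overline{B}_R$, and this \emph{round} spherical shell is simply connected for $n \ge 3$ and sits inside the covering region $E_s$, so the contraction of $\gamma$ pushes forward to give $[\beta]^m = 0$ in $\pi_1(f(B)\setminus K)$. Thus $[\beta]$ is a nontrivial torsion element unless $k = 1$, which is the desired contradiction. (Note that this step genuinely needs $n \ge 3$, as does your collar argument; for $n = 2$ the planar map $z \mapsto z^2$ on $B^2$ satisfies all the stated hypotheses and is not injective, so the statement must be read with $n \ge 3$.) In short, torsion-freeness is used to \emph{exclude} a constructed torsion element, not to trivialize the group; the device of closing up the lifts and contracting in a round annulus is the missing idea that lets the paper bypass exactly the uncontrollable topology of $U$ on which your argument founders.
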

\begin{proof}
  Since $B_f$ is compact and $f$ is a continuous proper map, both $f(B_f)$ and $f\inv (f(B_f))$ are also compact.
  Thus there exists $r_0 > 0$ such that for all $r \in [r_0,1)$, $f \inv (f (B_f)) \subset B_r$.
  For such $r \in [r_0,1)$ we denote
  \begin{align*}
    U_r \colonequals f \inv (f (B_r)),
    \quad
    \text{ and }
    \quad
    E_r \colonequals B \setminus \overline{U}_r.
  \end{align*}
  By Lemma \ref{lemma:VuorinenDim3Lemma} \eqref{part:ProperMap1} and \eqref{part:ProperMap2}
  the restriction of $f$ to either one of these  domains will be a proper branched cover.

  Since we assumed $f(B)$ to be torsion free at infinity, there exists a compact set $K \subset f(B)$ such that
  $K \supset f(\overline{B}_{r_0})$ and $f(B) \setminus K$ has a torsion-free fundamental group.
  We fix now a radius $s \in (r_0,1)$ for which $K \subset f(B)$, and take $R \in (s,1)$ to be such that
  $\overline{U}_s \subset B_R$; see Figure \ref{fig:TorsionConstruction}.
  Since $f \colon B \to \R^n$ is a proper branched cover, we note that all points in
  $f(B) \setminus f(B_f)$ have an equal amount of pre-images by Lemma \ref{lemma:Equidistribution};
  call this number $k$.
  In particular we note that since $B_f \subset B_s \subset B_R$, all the points in
  \begin{align*}
    f(B) \setminus f(\overline{B}_R)
    = f(E_R)
  \end{align*}
  have an equal amount of pre-images in $B$. But now, since by Lemma \ref{lemma:VuorinenDim3Lemma} \eqref{part:ProperMap2}
  $f \inv (f(E_R)) = E_R$, we note that all the points in $f(E_R)$ have $k$ pre-images in $E_R$. Thus we see that
  $f|_{E_R} \colon E_R \to f(E_R)$ is a $k$-to-one covering map.

  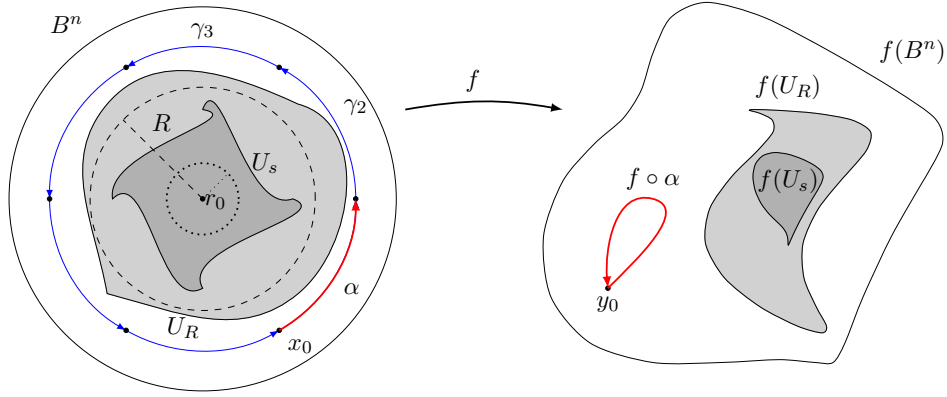
\begin{figure}[h!]
    \centering
    \resizebox{\textwidth}{!}{
      \begin{tikzpicture}
        \def\dotsize{0.05} 
        
        \begin{scope}[scale=0.7]
          
          \begin{scope}[shift={(-6,0)}]
            \def\rn{0.8}  
            \def\r{2.0}   
            \def\s{2.5}   
            \def\R{2.5}   
            \def\pr{3.4}  
            \def\one{4.3} 

            \begin{scope}[]
              \coordinate(r1_N) at ( 0 , \r);          
              \coordinate(r1_E) at ( \r, 0 );          
              \coordinate(r1_S) at ( 0 ,-\r);          
              \coordinate(r1_W) at (-\r, 0 );
            \end{scope}

            \begin{scope}[rotate=45,scale=1.2]
              \coordinate(R1_N) at ( 0 , \R);
              \coordinate(R1_E) at ( \R, 0 );
              \coordinate(R1_S) at ( 0 ,-\R);
              \coordinate(R1_W) at (-\R, 0 );
            \end{scope}

            \draw[fill, black!60, opacity=0.3] 
            (R1_N) to [out = 50, in=155] 
            (R1_E) to [out = -15, in=40] 
            (R1_S) to [out = 220, in=-15] 
            (R1_W) to [out = 100, in=230] 
            (R1_N);
            \draw[]
            (R1_N) to [out = 50, in=155] 
            (R1_E) to [out = -15, in=40] 
            (R1_S) to [out = 220, in=-15] 
            (R1_W) to [out = 100, in=230] 
            (R1_N);
            \node[below] at (260:1*\R) {$U_R$} ;

            \draw[fill, black!60, opacity=0.3] 
            (r1_N) to [out = 70, in=200] 
            (r1_E) to [out = -20, in=120] 
            (r1_S) to [out = 210, in=45] 
            (r1_W) to [out = 100, in=-10] 
            (r1_N);
            \draw[] 
            (r1_N) to [out = 70, in=200] 
            (r1_E) to [out = -20, in=120] 
            (r1_S) to [out = 210, in=45] 
            (r1_W) to [out = 100, in=-10] 
            (r1_N);
            \node[] at (30:0.8*\r) {$U_s$} ;

            \foreach \i in {1,...,6} {
              \draw[blue, -{latex}] (\i*60:\pr) arc (\i*60:(\i+1)*60:\pr);
              \draw[fill] (60*\i:\pr) circle [radius=\dotsize];
            }

            \node[below right] at (60*5:\pr) {$x_0$};
            \draw[red,thick,-{latex}] (60*5:\pr) arc (5*60:(5+1)*60:\pr);
            \node[below right] at (60*5.5:\pr) {$\alpha$};

            \node[above right] at (60*6.5:\pr) {$\gamma_2$};
            \node[above] at (60*7.5:\pr) {$\gamma_3$};
            
            \draw[dotted,thick] (0,0) circle [radius=\rn];
            \node[below ] at (45:0.5*\rn) {$r_0$} ;
            \draw[dotted] (0,0) -- (45:\rn);          

            \draw[dashed] (0,0) circle [radius=\R];
            \draw[dashed] (0,0) -- (135:\R);          
            \node[above right] at (135:0.75*\R) {$R$} ;          

            \draw[] (0,0) circle [radius=\one];
            \draw[fill] (0,0) circle [radius=\dotsize];
            \node[above left] at (125:\one) {$B^n$};
          \end{scope}

          \draw[-{latex}, thick] (-1.5,2) to [out=10,in=170] (2,2);
          \node[above] at (0,2.2) {$f$};

          \begin{scope}[shift={(6,0)}]
            \def\outer{4.0} 
            \def\fBR{2.5}   
            \def\fBs{1.0}   
            
            \draw[domain = 0:360, smooth,fill, black!60, opacity=0.3]  plot
            ( {0.5*\fBR*cos(\x) + 0.3*\fBR*sin(3*\x) + 1 },
            { \fBR*sin(\x) + 0.2*\fBR*cos(2*\x) } );
            \draw[domain = 0:360, smooth] plot
            ( { 0.5*\fBR*cos(\x) + 0.3*\fBR*sin(3*\x) + 1 },
            { \fBR*sin(\x) + 0.2*\fBR*cos(2*\x) } );
            \node[] at (1,2.5) {$f(U_R)$};

            \draw[domain = 0:360, smooth,fill, black!60, opacity=0.3]  plot
            ( { 0.6*\fBs*cos(\x) + 0.3*\fBs*sin(2*\x) + 1 },
            { \fBs*sin(\x) + 0.1*\fBs*cos(3*\x) } );
            \draw[domain = 0:360, smooth] plot
            ( { 0.6*\fBs*cos(\x) + 0.3*\fBs*sin(2*\x) + 1 },
            { \fBs*sin(\x) + 0.1*\fBs*cos(3*\x) } );
            \node[] at (1,0.4) {$f(U_s)$};
            
            \draw[domain = 0:360, smooth] plot
            ( { \outer*cos(\x) + 0.2*\outer*sin(3*\x) },
            { \outer*sin(\x) + 0.1*\outer*cos(4*\x) } );
            \node[above right] at (45:\outer) {$f(B^n)$};
            
            \def\yx{-3} 
            \def\yy{-2} 
            \coordinate (y0) at (\yx,\yy);
            \coordinate (y1) at (\yx+1,\yy+2);
            \node[below] at (y0) {$y_0$};
            \draw[fill] (y0) circle [radius=\dotsize];
            \draw[thick, red, -{latex}]
            (y0) to [out = 45, in=-15]
            (y1) to [out = 165, in=95] 
            (y0);
            \node[above] at (y1) {$f \circ \alpha$};
            
          \end{scope}
        \end{scope}
      \end{tikzpicture}
    }
    \caption{Essential objects in the proof of
      \ref{prop:NoTorsionImpliesVuorinen}.}
    \label{fig:TorsionConstruction}
  \end{figure}

  Fix a point $x_0 \in E_R$ and denote $f \inv (f(x_0)) \equalscolon \{ x_0, x_1, \ldots, x_{k-1} \}$.
  As $E_R$ is path-connected by Lemma \ref{lemma:VuorinenDim3Lemma} \eqref{part:Connected}, we may now take an injective path $\alpha \colon [0,1] \to E_R$
  with $\alpha(0) = x_0$ and $\alpha(1) = x_1$. The image of this loop, $\beta \colonequals f \circ \alpha \colon [0,1] \to fE_R$,
  is a loop based at $y_0 \colonequals f(x_0)$. If $\beta$ was zero-homotopic in $f(E_R)$, it would also be zero-homotopic in the larger
  domain $f(E_s)$ and we could lift the homotopy with
  the covering map $f|_{E_s} \colon E_s \to f(E_s)$ into a homotopy in $E_s$ contracting the path $\alpha$ to a point without
  changing the endpoints of the path, see \cite[Proposition 1.30]{Hatcher}.
  This is not possible when $k \geq 2$, and so we must have $[\beta] \neq 0$ in $\pi_1(f(E_s), y_0)$.
  Likewise, since $f$ is a proper map, its restriction to $B \setminus f \inv (f (B_f))$ is also a cover map and so $\beta$
  is not zero-homotopic also in $f(B) \setminus K \subset f(B) \setminus f(B_f)$.

  Next we construct a loop $\gamma \colon [0,m] \to E_R$, see again Figure \ref{fig:TorsionConstruction}.
  We set first $\gamma_1 = \alpha$. Then, when $\gamma_k \colon [0,k] \to E_R$
  has been defined and if $\gamma_k(0) \neq \gamma_k(1)$, we define $\gamma_{k+1}$ by lifting the path $\beta$ from the point
  $\gamma_k(1)$ and concatenating that lift to $\gamma_k$. Since the covering map is a local homeomorphism, this procedure
  is well defined and since $f|_{E_R}$ is $k$-to-one, it terminates after at most $k$ steps.

  But now we note that
  \begin{align*}
    |\gamma|
    \subset E_R
    \subset B \setminus \overline{B}_R
    \subset E_r,
  \end{align*}
  and so $\gamma$ can be contracted to a point in the annulus
  $ B \setminus \overline{B}_R$ and thus in $E_r$.
  This contracting homotopy can then be pushed with the covering map $f|_{E_r}$ into $f(E_r)$,
  and so we see that
  \begin{align*}
    0
    = [f \circ \gamma]
    = [\beta]^m,
  \end{align*}
  and so we see that $[\beta]$ is a non-trivial torsion element in $\pi_1(E_r, y_0)$. Since,
  as noted before, $[\beta] \neq 0$ also in $ f(B) \setminus K$ and clearly
  $[\beta]^m = 0$ in $f(B) \setminus K$, we see that $[\beta]$ is also a non-trivial torsion element
  in $f(B) \setminus K$.
  This is a contradiction and so the original claim holds.
\end{proof}

Our proof in dimension three relies on the following result of Papakyriakopoulos, see \cite[Corollary 31.8]{Papakyriakopoulos}.
\begin{proposition}\label{prop:3dimNoTorsion}
  Let $\Omega \subset \R^3$ be a domain. Then $\pi_1(\Omega)$ is torsion free.
\end{proposition}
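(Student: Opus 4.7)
The plan is to argue by contradiction, reducing to an asphericity statement that is incompatible with torsion via cohomological dimension. Suppose toward a contradiction that there exists $g \in \pi_1(\Omega, x_0) \setminus \{e\}$ with $g^n = e$ for some $n \ge 2$. Passing to the covering $p \colon \tilde\Omega \to \Omega$ corresponding to the finite cyclic subgroup $\langle g\rangle \cong \Z/n\Z$ yields a non-compact orientable $3$-manifold $\tilde\Omega$ with finite non-trivial fundamental group; the aim is to show that such a manifold cannot exist when $\Omega$ is a domain in $\R^3$.

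The first step I would take is to reduce to the case $\pi_2(\tilde\Omega) = 0$. Whenever $\pi_2(\tilde\Omega) \neq 0$, the Sphere Theorem of Papakyriakopoulos provides an embedded essential $2$-sphere $S \subset \tilde\Omega$; its image $p(S) \subset \Omega \subset \R^3$ bounds a $3$-ball by the Alexander--Schoenflies theorem, which is the geometric handle allowing one to cap off $S$. Surgering $\tilde\Omega$ along (lifts of) such spheres and combining with a prime decomposition argument, one replaces $\tilde\Omega$ by a $3$-manifold $M$ with $\pi_2(M) = 0$; crucially, since $\Z/n\Z$ is indecomposable as a free product, the torsion factor survives and $\pi_1(M)$ still contains a non-trivial element of finite order.

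The second step invokes asphericity. The universal cover $\tilde M$ of $M$ is a simply connected, non-compact orientable $3$-manifold with $\pi_2(\tilde M) \cong \pi_2(M) = 0$. By the Hurewicz theorem, $H_2(\tilde M; \Z) = 0$, and since $\tilde M$ is a non-compact $3$-manifold, $H_3(\tilde M; \Z) = 0$ as well. Thus $\tilde M$ has trivial reduced homology, is simply connected, and admits a CW-structure, so by Whitehead's theorem it is contractible. Consequently $M$ is a $K(\pi_1(M), 1)$-space modeled on a $3$-manifold, forcing $\operatorname{cd}\pi_1(M) \le 3$. But any group containing a non-trivial torsion element has infinite cohomological dimension, which contradicts the presence of $g$ in $\pi_1(M)$.

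The main obstacle is the first step: the $2$-sphere surgery must be performed with enough care to preserve the distinguished torsion element in $\pi_1$, which involves the prime decomposition machinery for (possibly non-compact) orientable $3$-manifolds embedded in $\R^3$ together with the careful application of Alexander--Schoenflies. Rather than unwinding these ingredients, I would appeal to them as a black box; this is essentially the classical content of the cited result of Papakyriakopoulos.
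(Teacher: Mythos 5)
The paper does not actually prove this proposition: it is used as a black box, quoted directly as \cite[Corollary 31.8]{Papakyriakopoulos}. Your closing paragraph ultimately does the same, so as a matter of citation your proposal lands in the same place as the paper; the problem is the sketch you offer as the content of that citation, which contains a genuine gap --- if unwound as written, it would prove a false statement. The only point at which you use the hypothesis $\Omega \subset \R^3$ is the Alexander--Schoenflies step, and that step is broken: the image $p(S)$ of the embedded essential sphere under the covering $p \colon \tilde\Omega \to \Omega$ need not be embedded (a covering map is only a local homeomorphism), so Schoenflies does not apply to it; moreover $\tilde\Omega$ is an abstract cover, not a subset of $\R^3$, so no ambient Schoenflies theorem is available upstairs, and even for an embedded essential sphere in $\Omega$ itself the Alexander ball need not lie in $\Omega$ (consider $\R^3 \setminus \{0\}$). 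With that step removed, your argument runs verbatim for an arbitrary orientable $3$-manifold --- and the conclusion is false for those: lens spaces are closed orientable $3$-manifolds with finite cyclic fundamental group.

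The failure localizes in your second step, where you assert that $\tilde M$ is non-compact in order to get $H_3(\tilde M;\Z)=0$. Capping off essential spheres is precisely the operation that can create closed summands: if $\tilde\Omega$ were, say, a punctured lens space (non-compact, orientable, $\pi_1 \cong \Z/n\Z$), then surgery along the sphere surrounding the puncture produces the closed lens space, whose universal cover $S^3$ is compact, and the cohomological-dimension contradiction evaporates. The genuine content of Papakyriakopoulos' corollary is exactly that covers of domains in $\R^3$ admit no such closed spherical summands, and this must be extracted from the tower construction (or, in modern treatments, an equivariant sphere theorem), not from Schoenflies applied to $p(S)$. A secondary issue: Kneser--Milnor prime decomposition is a theorem about \emph{compact} $3$-manifolds and fails for open ones in general; for your $\tilde\Omega$, with finite $\pi_1$, one can substitute Scott's compact core theorem, but the core's prime decomposition may still contain closed summands carrying the torsion, so this does not repair the main gap. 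In short: citing \cite{Papakyriakopoulos} as the paper does is fine, but your sketch should not be presented as a correct outline of its proof.
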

Note that by Proposition \ref{prop:3dimNoTorsion}, since any domain in $\R^3$ has torsion
free fundamental group, they in particular have torsion free fundamental group at infinity.
This yields the proof of our first main theorem.
\begin{proof}[Proof of Theorem \ref{thm:VuorinenDim3}]
  Let $f \colon B^3 \to f(B^3) \subset \R^3$ be a proper branched covering and denote $Y \colonequals f(B^3)$.
  By Lemma \ref{prop:3dimNoTorsion} we know that for any compact set $K \subset Y$, the
  fundamental group of $Y \setminus K$ is torsion-free. Thus $Y$ has torsion-free fundamental group
  at infinity, and the claim follows from Proposition \ref{prop:NoTorsionImpliesVuorinen}.
\end{proof}

\begin{remark}
  Note that if the target of $f$ is not assumed to be a manifold,
  we may take the the universal covering map $p \colon \bS^2 \to P^2$ onto the
  projective plane $P^2$ and let $f$ be the \emph{cone map} (see e.g.\ \cite{LuistoPrywes} for the terminology)
  \begin{align*}
    \cone(p) \colon \cone(\bS^2)
    = \overline{B^3} \to \cone(P^2).
  \end{align*}
  The mapping $f$ restricted
  to the open ball $B^3$ is a proper branched covering onto a space which is an open 3-manifold
  outside one singular point. Furthermore $B_f = \{ 0 \}$, so in particular the branch is
  non-empty but compact. Similar examples appear from universal covers of homology spheres.
  Thus we must assume that the image of $f$ is a manifold. We do remark that we do not
  know if the Vuorinen question holds for mappings $f \colon B^3 \to N$ where $N$ is
  a $3$-manifold not necessarily embeddable into $\R^3$.
\end{remark}

\subsection{Proof of the case $B_f = \emptyset$}
\label{sec:BranchlessVuorinen}

The proof in the setting of no branch set relies on the
fact that the mapping $f \colon B^n \to f(B^n) \subset \R^n$ is in fact a covering
map defined on a contractible $n$-manifold. This observation can be used to show that the target of the mapping is actually \emph{a Eilenberg-MacLane space $K(G,1)$}, i.e., a path connected space whose fundamental group is isomorphic to a group $G$ and which has contractible universal covering space, see for instance \cite[p.87 onwards]{Hatcher}.
After this we use the notion of Eilenberg-MacLane spaces to rule out
the examples with torsion-unfree fundamental groups which could
potentially arise in higher dimensions. The following statement can be found in \cite[Proposition 2.45, p.149]{Hatcher}.

\begin{proposition}\label{prop:CW-noTorsion}
  Let $Y$ be a finite-dimensional CW-complex. If $Y$ is a $K(G,1)$-space,
  then $G = \pi_1(Y)$ is torsion-free.
\end{proposition}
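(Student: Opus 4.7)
The plan is to argue by contradiction, reducing from an arbitrary torsion element of $G$ to a cyclic quotient whose classifying space has homology forced to be non-trivial in arbitrarily high dimensions, in contradiction with the finite-dimensionality hypothesis.

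Suppose toward contradiction that $G = \pi_1(Y)$ has a non-trivial torsion element $g$ of order $m \geq 2$, and let $H = \langle g \rangle \cong \Z/m\Z$. Since $Y$ is a $K(G,1)$-space, its universal cover $p \colon \widetilde{Y} \to Y$ is contractible, and the group $G$ acts freely and properly discontinuously on $\widetilde{Y}$ by deck transformations. The restriction of this action to the subgroup $H$ is likewise free, so the quotient $X \colonequals \widetilde{Y}/H$ admits $\widetilde{Y}$ as its universal cover. As $\widetilde{Y}$ is contractible and $\pi_1(X) \cong H$, the space $X$ is a $K(H,1)$-space, i.e.\ a $K(\Z/m\Z, 1)$.

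Next I would exploit the dimension hypothesis. Choose a CW-structure on $Y$ of dimension $n \colonequals \dim Y < \infty$. The universal cover $\widetilde{Y}$ inherits a canonical CW-structure of the same dimension $n$ in which the $G$-action is cellular, and passing to the quotient by the free cellular action of $H$ equips $X$ with a CW-structure of dimension at most $n$. Consequently $H_k(X;\Z) = 0$ for every $k > n$.

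To reach a contradiction I would invoke the standard computation of the (co)homology of a finite cyclic group: for $H = \Z/m\Z$ one has
\begin{equation*}
  H_k(K(\Z/m\Z,1);\Z) \;\cong\; \Z/m\Z
  \qquad \text{for every odd } k \geq 1,
\end{equation*}
so these groups are non-zero in arbitrarily high degrees. Since $X$ is a $K(\Z/m\Z,1)$, this contradicts the vanishing $H_k(X;\Z) = 0$ for $k > n$, and therefore $G$ can have no non-trivial torsion element.

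The main technical point in which one has to be careful is justifying that the finite dimensionality of $Y$ transfers to $X = \widetilde{Y}/H$ as a CW-complex (not merely as a topological space), since the whole argument hinges on the fact that a $K(\Z/m\Z,1)$ cannot be realized by any finite-dimensional CW-complex. Everything else is a direct application of covering space theory and a standard group-cohomology calculation for finite cyclic groups.
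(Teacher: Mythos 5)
Your argument is correct and is essentially the proof the paper points to: the paper gives no proof of its own but cites \cite[Proposition 2.45]{Hatcher}, whose argument is exactly your reduction to the covering space $X=\widetilde{Y}/\langle g\rangle$, a finite-dimensional CW-complex that would be a $K(\Z/m\Z,1)$, contradicting $H_k(K(\Z/m\Z,1);\Z)\cong\Z/m\Z$ for all odd $k$. The technical point you flag is handled by the standard fact that a covering space of an $n$-dimensional CW-complex inherits a CW-structure of dimension $n$ (lift the cells; the deck action permutes them freely), so the dimension bound on $X$ is legitimate.
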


The advantage of Proposition~\ref{prop:CW-noTorsion} is that it can be used to provide the torsion-freeness of the fundamental group of the target without any additional assumption on the dimension. Whenever this property of the fundamental group of the target is verified we can give a positive answer to Vuorinen question with the techniques introduced in this paper. Besides Proposition~\ref{prop:CW-noTorsion} we need also the following simple lemma to prove Theorem~\ref{thm:VuorinenNoBranch}. We note that Lemma \ref{lemma:YhasFiniteFG} is known to the experts in the field,
but we have not seen it explicitly stated in the literature so we provide a proof for the convenience of the reader.
\begin{lemma}\label{lemma:YhasFiniteFG}
  Let $f \colon B^n \to f(B^n) \subset \R^n$ be a proper branched covering, $n \geq 2$ and denote $Y = f(B^n)$. Then $\pi_1(Y)$ is finite.	
\end{lemma}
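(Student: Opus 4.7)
The plan is to realize $\pi_1(Y)$ as a quotient of a set of size at most $N(f)$. First, since $f$ is open and $B^n$ is open in $\R^n$, the image $Y = f(B^n)$ is itself an open $n$-manifold in $\R^n$, so general-position arguments are available. Write $k \colonequals N(f)$, which is finite by Lemma~\ref{lemma:Equidistribution}, and set
\begin{equation*}
  Y_0 \colonequals Y \setminus f(B_f), \qquad X_0 \colonequals f\inv(Y_0) = B^n \setminus f\inv(f(B_f)).
\end{equation*}

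The restriction $f_0 \colonequals f|_{X_0} \colon X_0 \to Y_0$ is again a proper branched covering (properness is inherited because $X_0 = f\inv(Y_0)$), and by construction $B_{f_0} = \emptyset$. Thus by Lemma~\ref{lemma:NoBranchProperIsCover}, $f_0$ is a covering map, and by Lemma~\ref{lemma:Equidistribution} it is $k$-sheeted. Fix a regular value $y_0 \in Y_0$ and $x_0 \in f\inv(y_0)$, and let $X_0'$ denote the component of $X_0$ containing $x_0$. Then $(f_0)_*(\pi_1(X_0', x_0))$ is a subgroup of $\pi_1(Y_0, y_0)$ of index at most $k$.

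The next step is to verify that the inclusion $\iota \colon Y_0 \hookrightarrow Y$ induces a surjection $\iota_* \colon \pi_1(Y_0, y_0) \to \pi_1(Y, y_0)$. Since $Y$ is an $n$-manifold, this reduces to showing that $f(B_f)$ has topological codimension at least two in $Y$: any loop in $Y$ can then be pushed off $f(B_f)$ by a standard general-position argument. The codimension bound itself combines the classical Chernavskii--V\"ais\"al\"a theorem $\dim B_f \le n-2$ with the observation that $f|_{B_f}$ is a finite-to-one closed map, so it cannot increase topological dimension.

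With these pieces in hand, the argument concludes by diagram-chasing. The composition
\begin{equation*}
  \pi_1(X_0', x_0) \xrightarrow{(f_0)_*} \pi_1(Y_0, y_0) \xrightarrow{\iota_*} \pi_1(Y, y_0)
\end{equation*}
factors through $\pi_1(B^n, x_0) = 0$, hence vanishes. Consequently $\ker \iota_*$ contains $(f_0)_*(\pi_1(X_0', x_0))$, and therefore also its normal closure in $\pi_1(Y_0, y_0)$, which still has index at most $k$. Since $\iota_*$ is surjective, $\pi_1(Y)$ is a quotient of this finite group, giving $\abs{\pi_1(Y)} \le k$. I expect the technical heart to be the codimension bound on $f(B_f)$ together with the accompanying general-position argument; the rest is essentially bookkeeping with fundamental groups.
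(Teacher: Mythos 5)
Your route is genuinely different from the paper's and, with one repair, it works. The paper never touches the branch set image or dimension theory: it lifts $f$ to the universal cover $p \colon \tilde Y \to Y$ (possible since $B^n$ is simply connected), uses properness of $f$ to show the lift $\tilde f \colon B^n \to \tilde Y$ is surjective, and then reads off $\# \pi_1(Y) = \# p\inv(y) \le N(f) < \infty$ from $f = p \circ \tilde f$ and Lemma \ref{lemma:Equidistribution}. You instead delete $f(B_f)$, apply Lemma \ref{lemma:NoBranchProperIsCover} to the restricted covering $f_0$, and transport the index bound through $\iota_* \colon \pi_1(Y_0) \to \pi_1(Y)$; the group-theoretic bookkeeping at the end is correct (the factorization through $\pi_1(B^n)=0$, the index of $\ker\iota_*$, the quotient), and both arguments in fact yield the same quantitative bound $\abs{\pi_1(Y)} \le N(f)$. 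What your approach costs is a nontrivial codimension-two input that the paper avoids entirely; what it buys is an explicit finite-index subgroup of $\pi_1(Y_0)$ killed in $\pi_1(Y)$, which is close in spirit to how the paper exploits the covering over $E_R$ in Proposition \ref{prop:NoTorsionImpliesVuorinen}.

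There is, however, one genuine flaw in your justification of $\dim f(B_f) \le n-2$: the principle that a finite-to-one closed map cannot increase topological dimension is false. By Hurewicz's dimension-raising theorem, a closed surjection whose fibers have cardinality at most $k$ satisfies only $\dim f(X) \le \dim X + k - 1$, and this bound is sharp --- finite-to-one closed maps can raise dimension. So $\dim B_f \le n-2$ together with finiteness of the fibers of $f|_{B_f}$ does not deliver the bound on the image by your argument. Fortunately the detour is unnecessary: V\"ais\"al\"a's theorem \cite{Vaisala} (the Chernavskii--V\"ais\"al\"a theorem in its full form) states directly that $\dim B_f = \dim f(B_f) = \dim f\inv(f(B_f)) \le n-2$ for a discrete open map between $n$-manifolds. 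With that citation the rest of your steps stand: a closed set of topological dimension at most $n-2$ neither separates open connected subsets of an $n$-manifold nor obstructs surjectivity of $\pi_1$ of the complement's inclusion (subdivide a loop into short subarcs with endpoints off the set and reroute each subarc inside a small ball minus the set; note this is a dimension-theoretic argument rather than literal general position, since $f(B_f)$ need not be polyhedral). The same theorem also gives you, for free, that $X_0$ and $Y_0$ are connected, which you use implicitly when calling $f_0$ a $k$-sheeted covering and speaking of the index of $(f_0)_*\pi_1(X_0', x_0)$.
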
	
\begin{proof}
  Take the universal covering $\tilde Y$ of $Y$. Since $B^n$ is simply connected, $f$ lifts to a map $\tilde f \colon B^n \to \tilde Y$.
  We show first that the lift $\tilde f$ is surjective. Towards contradiction
  suppose it is not, whence there exists a point $z_0 \in \partial \tilde f (B^n)$,
  and in particular for all $r>0$, $\tilde f \inv (\overline{B}(z_0,r))$ is not a compact subset of $B^n$.
  Fix a covering neighborhood $U$ of $p(z_0)$, and let $V$ be the $z_0$-component of $p \inv ( U )$. By the definition of a covering neighborhood,
  the restriction $p|_V \colon V \to U$ is a homeomorphism. Now for a radius $r>0$ such that $\overline{B}(z_0,r) \subset V$ we note that
  $p(\overline{B}(z_0,r))$ is a compact subset of $Y$ such that $f \inv (p \overline{B}(z_0,r))$ is not a compact subset of $B^n$ since
  $f = p \circ \tilde f$
  and $\tilde f \inv \overline{B}(z_0,r)$ is not compact. This is a contradiction with the properness of $f$ and so $\tilde f$ must be surjective.

  Next we note that since $\tilde Y$ is the universal cover of $Y$,
  $\# p\inv(y) = \# \pi_1(Y)$ for any $y \in Y$, see e.g.\ \cite[Proposition 1.32]{Hatcher}.
  Since $f = p \circ \tilde f$ and $f$ is surjective, we conclude that $N(f) \geq \# \pi_1(Y)$,
  which implies with Lemma \ref{lemma:Equidistribution}
  that $\pi_1(Y)$ is finite.
\end{proof}

\begin{proof}[Proof of Theorem \ref{thm:VuorinenNoBranch}] 	
  Suppose $f \colon B^n \to f(B^n) \subset\R^n$ is a proper branched covering such that
  $B_f = \emptyset$. Denote 
  $$Y \colonequals f(B^n) \subset \R^n \quad \text{and} \quad G \colonequals \pi_1(Y).$$ 
  Now, on the one hand, by Lemma~\ref{lemma:NoBranchProperIsCover} $f$ is a covering map, and since $B^n$ is simply connected it is the universal cover
  of $Y$. This means, by definition, that $Y$ is an $K(G,1)$-space; see \cite[p.87]{Hatcher}.
  
  On the other hand, as $Y \subset \R^n$ is a domain, its Whitney decomposition into dyadic cubes gives it the structure of a CW-complex. Thus by Proposition~\ref{prop:CW-noTorsion} the fundamental group $G$ of $Y$ has no torsion.

  Finally, by Lemma~\ref{lemma:YhasFiniteFG} we note that $\pi_1(Y)$ must be finite. In
  particular, we may deduce that as a finite torsion-free group $\pi_1(Y)$ is trivial and therefore $Y$ is simply connected.
  This shows that $f$ is a covering map from a path connected space to a simply connected space, which implies that
  it is actually a global homeomorphism, see e.g.\ \cite[Theorem 1.38]{Hatcher}. This proves the claim. 
\end{proof}

Lastly, we describe a classical example by Church and Timourian (\cite{ChurchTimourian}), based on deep
results of Cannon et.\ al. (see e.g.\ \cite{Cannon} and the references therein),
of a continuous, open and discrete mapping $\bS^5 \to \bS^5$ with very nontrivial
branch behavior. For further discussion and a precise definition on this map see e.g.\
\cite{AaltonenPankka} or \cite{LuistoPrywes}.
\begin{example}\label{ex:Nemesis}
  Let $p \colon \bS^3 \to P$ be the universal covering map of the \emph{Poincar\'e homology sphere} $P$.
  Due to the work of Cannon and Edwards (see e.g.\ \cite{Cannon}) the double suspension $f$ of this covering map $p$ is a branched cover
  between $5$-spheres and has a branch set equal to the suspension of the two branch points of the single suspension of the covering map $p$.
  Thus the branch set $B_{f}$ of $f$ is PL-equivalent to $\bS^1$ and so we see that $f$
  is a branched covering between two spheres with a branch set of codimension four.

  Note that the image of the branch set $B_{S^2(f)}$ is complicated even though it is a Jordan curve
  in $\bS^5$ since its complement has a fundamental group of 120 elements -- the binary icosahedral group.
  Furthermore, removing the branch set and its image from the domain and range, respectively, gives rise to a covering map
  $\R^5 \setminus \{ (x,0,0,0,0) \} \to \R^5$ whose range has a finite non-trivial fundamental group.
\end{example}


\def\cprime{$'$}\def\cprime{$'$}

\end{document}